\newtheorem{theorem}{Theorem}[section]
\newtheorem{corollary}[theorem]{Corollary}
\newtheorem{proposition}[theorem]{Proposition}
\newtheorem*{theorem*}{Theorem}
\theoremstyle{definition}
\newtheorem{definition}[theorem]{Definition}
\newtheorem{example}[theorem]{Example}
\newtheorem{remark}[theorem]{Remark}
\newcommand{\R}{{\mathbb R}}
\newcommand{\C}{{\mathbb C}}
\newcommand{\beq}{\begin{equation}}
\newcommand{\eeq}{\end{equation}}
\newcommand{\f}{\varphi}
\newcommand{\tf}{\widetilde{\f}}
\newcommand{\SU}{{\mathrm{SU}}}
\newcommand{\GL}{{\mathrm {GL}}}
\newcommand{\G}{{\mathrm G}}
\newcommand{\SL}{{\mathrm {SL}}}
\newcommand{\ddt}{\frac{\partial}{\partial t}}
\newcommand{\W}{\wedge}
\newcommand{\Ric}{{\rm Ric}}
\newcommand{\frg}{\mathfrak{g}}
\newcommand{\frn}{\mathfrak{n}}
\newcommand{\fre}{\mathfrak{e}}
\newcommand{\frs}{\mathfrak{s}}
\newcommand{\frz}{\mathfrak{z}}
\newcommand{\frr}{\mathfrak{r}}
\newcommand{\fru}{\mathfrak{u}}
\newcommand{\+}{{{\scriptscriptstyle +}}}
\newcommand{\st}{\ |\ }
\newcommand{\sst}{\scriptscriptstyle}
\newcommand{\jf}{\mathrm{j}_\f}
\numberwithin{equation}{section}
\title[A class of eternal solutions to the G$_2$-Laplacian flow]{A class of eternal solutions to the G$_{\mathbf2}$-Laplacian flow}
\author{Anna Fino and Alberto Raffero}
\address{Dipartimento di Matematica ``G. Peano'' \\ Universit\`a degli Studi di Torino\\ Via Carlo Alberto 10\\10123 Torino\\ Italy}
\email{annamaria.fino@unito.it, alberto.raffero@unito.it}
\subjclass[2010]{53C44, 53C10}
\keywords{Laplacian flow, $\G_2$-structure, extremally Ricci-pinched}
\thanks{The authors were supported by GNSAGA of INdAM}
\begin{document}
\begin{abstract}
We explicitly describe the solution of the G$_2$-Laplacian flow starting from an extremally Ricci-pinched closed G$_2$-structure on a compact 7-manifold  
and we investigate its properties. In particular, we show that the solution exists for all real times and that it remains extremally Ricci-pinched. 
This result holds more generally on any 7-manifold whenever the intrinsic torsion of the extremally Ricci-pinched G$_2$-structure has constant norm. 
We also discuss various examples. 
\end{abstract}
\maketitle

\section{Introduction} 
A $\G_2$-structure on a seven-dimensional smooth manifold $M$ is characterized by the existence of a 3-form $\f\in\Omega^3(M)$ satisfying a suitable non-degeneracy condition.   
Such a $3$-form gives rise to a Riemannian metric $g_{\f}$ and to a volume form $dV_{\f}$ on $M.$   

By \cite{FeGr}, the holonomy of $g_{\f}$ is contained in $\G_2$ if both $d\f$ and $d*_\f\f$ vanish, $*_\f$ being the Hodge operator of $g_\f$. 
On the other hand, when a Riemannian metric $g$ has $\mathrm{Hol}(g)\subseteq\G_2$, then there exists a unique $\G_2$-structure $\f$ satisfying $d\f=0$, $d*_{\f}\f=0$ and such that $g_{\f}=g$.
A $\G_2$-structure defined by a non-degenerate 3-form $\f$ which is both closed and co-closed is said to be {\em torsion-free} and the corresponding Riemannian metric $g_\f$ is Ricci-flat. 
A $\G_2$-structure $\f$ satisfying the less restrictive condition $d\f=0$ is called {\em closed}. 
In such a case, the intrinsic torsion can be identified with a unique 2-form $\tau$ 
such that $d*_\f\f=\tau\W\f$, and the scalar curvature of $g_\f$ is given by $-\frac12|\tau|_\f^2$, where $|\cdot|_\f$ denotes the norm induced by $g_\f$ (cf.~\cite{Bry}). 

Closed $\G_2$-structures with small torsion constitute the starting point in Joyce's construction of compact 7-manifolds with holonomy $\G_2$ \cite{Joy1}. 
Besides this and the glueing constructions \cite{CHNP,JoKa,Kov}, in recent years a lot of effort has been made in order to understand whether it is possible 
to obtain metrics with holonomy $\G_2$ using a geometric flow approach. 
So far, the main results in this direction have been obtained for the $\G_2$-{\em Laplacian flow} introduced by Bryant in \cite{Bry}: 
\[
\begin{cases}
\ddt \f(t) = \Delta_{\f(t)}\f(t),\\
d \f(t)=0,\\
\f(0)=\f.
\end{cases}
\]
Here, $\f$ is a given closed $\G_2$-structure and $\Delta_{\f(t)}$ denotes the Hodge Laplacian of $g_{\f(t)}$. 

Short-time existence and uniqueness of the solution of the Laplacian flow on a compact manifold were proved by Bryant and Xu in \cite{BrXu}, while  
the geometric and analytic properties of the flow were deeply investigated by Lotay and Wei in \cite{LotWei1,LotWei2,LotWei3}. 
In particular, they proved that the solution $\f(t)$ exists as long as the velocity of the flow $|\Delta_{\f(t)}\f(t)|_{\f(t)}$ remains bounded.  
It is still an open problem whether a bound on the scalar curvature is sufficient to obtain a long-time existence result (cf.~\cite{LotWei1}). 
Further aspects of the Laplacian flow were studied in \cite{FFM,FiYa,FiRa,HWY, Lau1,Lau2,Lin}. 

By \cite{Bry}, on a compact 7-manifold $M$ the Ricci tensor and the scalar curvature of the metric induced by a closed $\G_2$-structure $\f$ must satisfy the following inequality 
\[
\int_M\left[\mathrm{Scal}(g_\f)\right]^2dV_\f \leq 3 \int_M \left|\mathrm{Ric}(g_\f)\right|^2 dV_\f. 
\]
In particular, the metric $g_\f$ is Einstein if and only if the $\G_2$-structure is torsion-free (see also \cite{ClIv}). 
The above inequality reduces to an equality if and only if the intrinsic torsion form $\tau$ fulfills the equation $d\tau = \frac16\, |\tau|_\f^2\,\f +\frac16\,*_\f(\tau\W\tau).$ 
When this happens, the closed $\G_2$-structure is called {\em extremally Ricci-pinched} ({\em ERP} for short).  

Two examples of manifolds endowed with an ERP closed $\G_2$-structure were obtained by Bryant \cite{Bry} and Lauret \cite{Lau2}. 
Both can be described as simply connected solvable Lie groups endowed with a left-invariant ERP closed $\G_2$-structure. 
In the first case, the Lie group is not unimodular. Nevertheless, it admits a compact quotient by a torsion-free discrete subgroup of the full 
automorphism group of the $\G_2$-structure. In the second case, the Lie group is unimodular and the existence of a compact quotient has been recently proved in \cite{KaLa}. 
In \cite{Lau2}, Lauret proved that in both examples the ERP closed $\G_2$-structure $\f$ is a {\em steady Laplacian soliton}, i.e., it satisfies the equation 
\[
\Delta_\f\f = \mathcal{L}_X\f +\lambda\f,
\] 
for $\lambda=0$ and for some vector field $X$. 
General results on Laplacian solitons (cf.~e.g.~\cite[Sect.~9]{LotWei1}) allow one to conclude that the solution of the Laplacian flow starting from one of these ERP closed 
$\G_2$-structures is self-similar and {\em eternal}, i.e., it exists for all real times. 
By \cite{Lin}, compact Laplacian solitons with $\lambda=0$ are necessarily torsion-free. 
Thus, none of the above examples can descend to a steady Laplacian soliton on any compact quotient of the corresponding Lie group 
and, more generally, ERP closed $\G_2$-structures on compact manifolds cannot be steady Laplacian solitons.

In the present paper, we study the behaviour of the Laplacian flow starting from an ERP closed $\G_2$-structure in greater generality. 
Our main results are contained in Section \ref{LaplacianERP}. 
In Theorem \ref{MainThm}, we show that the solution of the Laplacian flow starting from an ERP closed $\G_2$-structure $\f$ whose intrinsic torsion form $\tau$ has constant norm is given by 
\[
\f(t) = \f + f(t)\,d\tau,
\]
where $f(t) = \frac{6}{|\tau|_\f^2}\left(\exp\left(\frac{|\tau|_\f^2}{6}\,t\right)-1\right)$. 
From this expression, we easily see that the solution exists for all real times. This result holds, in particular, when the closed $\G_2$-structure is ERP and the manifold $M$ is compact. 
To prove Theorem \ref{MainThm}, we first show some useful results on ERP closed G$_2$-structures in Proposition \ref{PrepProp}, and then we use them to show that the Laplacian flow 
we are considering is equivalent to a Cauchy problem for the function $f(t)$. 
The properties obtained in Proposition \ref{PrepProp} allow us to conclude that the solution $\f(t)$ is ERP with constant velocity for all $t\in\R$, 
and that the Ricci tensor of $g_{\f(t)}$ is constant along the flow.  
Finally, by backward uniqueness and real analyticity of the solution of the Laplacian flow on compact 7-manifolds \cite{LotWei1,LotWei3}, 
we conclude that a solution cannot become ERP in finite time unless it starts from an ERP closed $\G_2$-structure.  

In Section \ref{AsBe}, we study the asymptotic behaviour of the ERP solution $\f(t)$ in the compact case. 
In particular, we show that the volume of the manifold with respect to the Riemannian metric $g_{\f(t)}$ increases without bound as $t\rightarrow+\infty$, 
while it shrinks as $t\rightarrow-\infty$. 

In Section \ref{ExSect}, we review the two examples of ERP closed $\G_2$-structures mentioned above, and we discuss some related results. 
In Example \ref{1ParEx}, we show that Bryant's example belongs to a one-parameter family of inequivalent solvable Lie groups admitting a left-invariant ERP closed 
$\G_2$-structure, while in Theorem \ref{UnimUniq} we prove that a unimodular Lie group endowed with a left-invariant ERP closed $\G_2$-structure is isomorphic to Lauret's example.

\section{Preliminaries}

\subsection{Stable forms in dimension seven}\label{stableforms}
According to \cite{Hit}, a $k$-form on a real $n$-dimensional vector space $V$ is said to be {\em stable} if its $\GL(V)$-orbit is open in $\Lambda^k(V^*)$. 

In the present paper, we shall mainly deal with stable 3-forms in dimension seven. 
They can be characterized as follows. 
\begin{proposition}[\cite{Hit}]
Let $V$ be a seven-dimensional real vector space. Consider a 3-form $\phi\in\Lambda^3(V^*)$ and the symmetric bilinear map 
\[
b_\phi: V\times V\rightarrow\Lambda^7(V^*),\quad b_\phi(v,w) = \frac16\, \iota_v\phi\W\iota_w\phi\W\phi. 
\]
Then, $\phi$ is stable if and only if $\det(b_\phi)^{\sst1/9}\in\Lambda^7(V^*)$ is not zero. 
\end{proposition}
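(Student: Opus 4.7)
The plan is to leverage the $\GL(V)$-equivariance of the assignment $\phi\mapsto b_\phi$ and reduce both directions to orbit-theoretic statements.

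First, I would verify that $b_\phi$ is symmetric, which is immediate since $\iota_v\phi$ and $\iota_w\phi$ are $2$-forms and hence commute under wedge, and that $\det(b_\phi)^{\sst 1/9}$ is a genuine element of $\Lambda^7 V^*$. The latter is a basis-change check: writing $b_\phi(e_i,e_j)=B_{ij}\,e^1\W\cdots\W e^7$ in a basis $\{e_i\}$, one verifies that under $g\in\GL(V)$ the matrix $B$ transforms as $B\mapsto (\det g)\, g^\top B\, g$, whence $\det B\mapsto (\det g)^9\det B$. Consequently $\phi\mapsto\det(b_\phi)^{\sst 1/9}$ is a well-defined map $\Lambda^3 V^*\to\Lambda^7 V^*$ (using signed $9$th roots) and is $\GL(V)$-equivariant for the natural actions on both sides.

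For the direction $(\Rightarrow)$, I would exhibit one explicit $3$-form with $\det(b_\phi)\neq 0$, for instance the standard $\G_2$ 3-form $\phi_0=e^{123}+e^{145}+e^{167}+e^{246}-e^{257}-e^{347}-e^{356}$, for which a direct calculation gives $b_{\phi_0}=\delta\otimes (e^1\W\cdots\W e^7)$ with $\delta$ the Euclidean inner product. Hence the polynomial $\phi\mapsto\det(b_\phi)$ is not identically zero, so its zero locus $Z\subset\Lambda^3 V^*$ is a proper Zariski-closed subvariety with empty interior in the classical topology. Since $Z$ is $\GL(V)$-invariant, no open orbit can lie inside $Z$; hence a stable $\phi$ satisfies $\det(b_\phi)\neq 0$.

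For the converse $(\Leftarrow)$, the strategy is an infinitesimal dimension count. When $\det(b_\phi)\neq 0$, the form $b_\phi$ normalized by $\det(b_\phi)^{\sst 1/9}$ defines a nondegenerate symmetric bilinear form $g_\phi$ on $V$. Any $A$ in the stabilizer $\aut(\phi):=\{A\in\ggl(V):A\cdot\phi=0\}$ must preserve $b_\phi$ and its associated volume, so $\aut(\phi)\subset\so(g_\phi)$. One then uses the $\SO(g_\phi)$-decompositions $\ggl(V)=\R\,\Id\oplus\so(g_\phi)\oplus S^2_0 V^*$ and $\Lambda^3 V^*=\Lambda^3_1\oplus\Lambda^3_7\oplus\Lambda^3_{27}$ to check that the infinitesimal orbit map $A\mapsto A\cdot\phi$ sends each summand of $\ggl(V)$ isomorphically onto the corresponding component of $\Lambda^3 V^*$, with total kernel equal to the $14$-dimensional $\mathfrak{g}_2$-subalgebra inside $\so(g_\phi)$. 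This yields orbit dimension $49-14=35=\dim\Lambda^3 V^*$, so the orbit is open and $\phi$ is stable.

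The main obstacle is this $(\Leftarrow)$ step: the $\SO(g_\phi)$-isotypic decomposition of $\Lambda^3 V^*$ used in the dimension count ordinarily presupposes that $\phi$ is already a $\G_2$-type $3$-form, risking circularity. To make the argument watertight one typically either invokes the classical classification of $\GL(V)$-orbits in $\Lambda^3 V^*$ for $\dim V=7$, which produces exactly two open orbits corresponding to the compact and split real forms of $\G_2$, or carries out a direct normal-form reduction of $\phi$ using the metric data extracted from $b_\phi$.
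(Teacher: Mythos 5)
The paper does not prove this proposition: it is quoted verbatim from Hitchin's \emph{Stable forms and special metrics} and used as a black box, so there is no in-paper argument to compare against. Judged on its own terms, your proposal is half a proof. The preliminary equivariance check ($B\mapsto(\det g)\,g^\top Bg$, hence $\det B\mapsto(\det g)^9\det B$, hence $\det(b_\phi)^{1/9}$ is a well-defined equivariant element of $\Lambda^7V^*$) is correct, and the forward direction is complete and clean: $\det b_{\phi_0}\neq0$ for the standard $\G_2$-form, so the zero locus $Z$ is a proper invariant subvariety with empty interior and cannot contain an open orbit.

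The converse, however, has a genuine gap, and your own final paragraph concedes it without repairing it. From $A\cdot\phi=0$ and $\det b_\phi\neq0$ you correctly get $\mathfrak{aut}(\phi)\subseteq\mathfrak{so}(g_\phi)$, but that only bounds the stabilizer by $21$ and the orbit dimension from below by $49-21=28<35$. The finer step --- splitting $\Lambda^3V^*$ as $\Lambda^3_1\oplus\Lambda^3_7\oplus\Lambda^3_{27}$ and identifying the kernel of $A\mapsto A\cdot\phi$ with a $14$-dimensional copy of $\mathfrak{g}_2$ --- is not an $\SO(g_\phi)$-statement at all ($\Lambda^3\R^7$ is irreducible under $\SO(7)$); it is the $\G_2$-isotypic decomposition, which presupposes that $\phi$ already lies in one of the two standard orbits. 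That is precisely the content of the implication you are trying to prove, so the argument as written is circular. Deferring to ``the classical classification of $\GL(7,\R)$-orbits'' or ``a direct normal-form reduction'' names two legitimate escape routes but carries out neither; the classification (Schouten, or Hitchin's own argument) \emph{is} the hard half of the proposition. To close the gap you must actually show that nondegeneracy of $b_\phi$ forces $\phi$ into a standard orbit --- e.g.\ by proving that the cross product $\times_\phi$ determined by $\phi$ and $g_\phi$ satisfies the (split) octonionic identities, so that $(V,g_\phi,\times_\phi)$ is the imaginary part of a real octonion algebra, of which there are exactly two up to isomorphism --- or by performing the normal-form reduction explicitly.
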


Given a stable 3-form $\phi$, the symmetric bilinear map 
\begin{equation}\label{gphirep}
g_\phi \coloneqq \det(b_\phi)^{\sst-1/9}\,b_\phi:V\times V\rightarrow\R
\end{equation}
is either positive definite or it has signature $(3,4)$. These conditions characterize the only two open $\GL(V)$-orbits contained in $\Lambda^3(V^*)$. 

We denote the open orbit of stable 3-forms for which \eqref{gphirep} is positive definite by $\Lambda^3_\+(V^*)$. 
It is well-known that the $\GL^\+(V)$-stabilizer of a 3-form $\phi\in\Lambda^3_\+(V^*)$ is isomorphic to the exceptional Lie group $\G_2$, 
and that there exists a basis $(e^1,\ldots,e^7)$ of $V^*$ for which  
\begin{equation}\label{G2adapted}
\phi = e^{123}+e^{145}+e^{167}+e^{246}-e^{257}-e^{347}-e^{356},
\end{equation}
$e^{ijk}$ being a shorthand for $e^i\W e^j \W e^k$. 

\subsection{Closed G$_\mathbf{2}$-structures} 
Let $M$ be a seven-dimensional smooth manifold and let $\Lambda^{3}_{\sst+}(T^*M)$ denote the open subbundle of $\Lambda^{3}(T^*M)$ 
whose fibre over each point $x\in M$ is given by $\Lambda^{3}_{\sst+}(T^*_xM)$. 

A $\G_2$-structure on $M,$ namely a $\G_2$-reduction of the frame bundle $FM\rightarrow M$, is characterized by the existence of a stable 3-form 
$\f\in\Omega^{3}_{\sst+}(M)\coloneqq \Gamma(\Lambda^{3}_{\sst+}(T^*M))$.  
This 3-form gives rise to a Riemannian metric $g_\f$ with volume form $dV_\f$ via the identity  
\[
g_\f(X,Y)\,dV_\f = \frac16\,\iota_{\sst X}\f\W\iota_{\sst Y}\f\W\f, 
\]
for all vector fields  $X,Y\in\mathfrak{X}(M)$. 
We denote by $*_\f$ the Hodge operator determined by $g_\f$, and by $|\cdot|_\f$ the pointwise norm induced by $g_\f$. 

It follows from the discussion in Section \ref{stableforms} that at each point $x$ of $M$ there exists a basis 
$\mathcal{B}^*=(e^1,\ldots,e^7)$ of the cotangent space $T^*_xM$ for which $\f|_x$ can be written as in \eqref{G2adapted}. 
We shall call $\mathcal{B}^*$ an {\em adapted basis} for the $\G_2$-structure $\f$. 

The intrinsic torsion of a $\G_2$-structure $\f$ can be identified with the covariant derivative of $\f$ with respect to the Levi Civita connection $\nabla^\f$ of $g_\f$. 
By \cite{FeGr}, a $\G_2$-structure $\f$ is {\em torsion-free}, i.e., $\nabla^\f\f\equiv0$, if and only if the 3-form $\f$ is both closed and coclosed. 

A $\G_2$-structure is said to be {\em closed} if the defining 3-form $\f$ satisfies the equation $d\f=0$. 
When this happens, the intrinsic torsion can be identified with a unique 2-form 
$\tau\in\Omega^{2}_{14}(M)\coloneqq\left\{\kappa\in\Omega^{2}(M)\st \kappa\W\f = -*_\f\kappa \right\}=\left\{\kappa\in\Omega^{2}(M)\st \kappa\W*_\f\f = 0 \right\}$ 
such that 
\[
d*_\f\f = \tau\W\f. 
\]
Clearly, the {\em intrinsic torsion form} $\tau$ vanishes identically if and only if the $\G_2$-structure is torsion-free. 
Notice that $\tau= d^*\f = -*_\f d *_\f\f$, thus it is coclosed and its exterior derivative coincides with the Hodge Laplacian $\Delta_\f\f = (dd^*+d^*d)\f = -d*_\f d *_\f \f$ of $\f$. 
Properties of closed $\G_2$-structures were investigated in \cite[sect.~4.6]{Bry} and in \cite{ClIv}. 

By \cite{HaLa}, the closed 3-form $\f$ defines a calibration on $M.$ 
An oriented three-dimensional submanifold of $M$ is called {\em associative} if it is calibrated by $\f$, while an oriented four-dimensional submanifold $N$ is called 
{\em coassociative} if $\f|_N\equiv0$ (see \cite[Sect.~IV]{HaLa} and \cite[Ch.~12]{Joy} for more details). 

By \cite{Bry}, the Ricci tensor and the scalar curvature of the Riemannian metric $g_\f$ induced by a $\G_2$-structure $\f$ can be expressed in terms of the intrinsic torsion.  
In particular, when $\f$ is closed the Ricci tensor has the following expression, 
\[
\mathrm{Ric}(g_\f) = \frac{1}{4}|\tau|_\f^2\, g_\f -\frac{1}{4}\,\mathrm{j}_\f\left(d\tau-\frac{1}{2}*_\f(\tau\W\tau)\right),
\]
where the map $\jf : \Omega^3(M) \rightarrow \mathcal{S}^2(M)$ is defined as follows
\[
\jf(\beta)(X,Y) = *_\f\left(\iota_{\sst X}\f\W\iota_{\sst Y}\f\W\beta\right),
\]
and the scalar curvature is given by
\[
\mathrm{Scal}(g_\f) = -\frac{1}{2}|\tau|_\f^2. 
\]

\subsection{The G$_{\mathbf2}$-Laplacian flow}
Consider a 7-manifold $M$ endowed with a closed $\G_2$-structure $\f$. The {\em Laplacian flow} starting from $\f$ is the initial value problem 
\begin{equation}\label{LapFlow}
\begin{cases}
\ddt \f(t) = \Delta_{\f(t)}\f(t),\\
d \f(t)=0,\\
\f(0)=\f.
\end{cases}
\end{equation}
This flow was introduced by Bryant in \cite{Bry} to study seven-dimensional manifolds admitting closed $\G_2$-structures. 
Short-time existence and uniqueness of the solution of \eqref{LapFlow} when $M$ is compact were proved in \cite{BrXu}. 
\begin{theorem}[\cite{BrXu}] 
Assume that  $M$ is compact. Then, the Laplacian flow \eqref{LapFlow} has a unique solution defined for a short time
$t\in[0,\varepsilon)$, with $\varepsilon$  depending on $\f$. 
\end{theorem}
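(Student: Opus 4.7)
The plan is to apply a DeTurck-type trick. The Laplacian flow \eqref{LapFlow} is invariant under the pullback action of $\Diff(M)$, so its linearization at any closed $\G_2$-structure $\f$ annihilates every infinitesimal diffeomorphism variation $\cL_X\f$. Equivalently, the principal symbol of the nonlinear operator $\f\mapsto\Delta_\f\f$ has a nontrivial kernel precisely in those directions, and the flow is only weakly parabolic; standard parabolic theory does not apply to \eqref{LapFlow} directly.

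To remedy this, I would fix a background closed $\G_2$-structure $\tf$ (for instance the initial datum $\f$) and introduce a DeTurck vector field $V(\f)$ built from the trace, with respect to $g_\f$, of the difference tensor $\n^\f-\n^{\tf}$. Then I would consider the modified initial value problem
\[
\ddt\f_{\sst D}(t) = \Delta_{\f_{\sst D}(t)}\f_{\sst D}(t) + \cL_{V(\f_{\sst D}(t))}\f_{\sst D}(t), \qquad \f_{\sst D}(0)=\f.
\]
A direct computation of the principal symbol of the right-hand side, using the algebraic dependence $\f\mapsto g_\f$ encoded by \eqref{gphirep}, should show that the gauge term $\cL_{V(\f)}\f$ exactly cancels the diffeomorphism-direction kernel. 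The resulting evolution is then a quasilinear strictly parabolic system on the compact manifold $M$, so short-time existence and uniqueness of a smooth solution $\f_{\sst D}(t)$ follow from standard parabolic theory; openness of the stability condition ensures that $\f_{\sst D}(t)\in\Omega^{3}_{\sst+}(M)$ for small $t$.

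From $\f_{\sst D}(t)$ I would recover a solution of \eqref{LapFlow} by pullback: let $\psi_t$ be the one-parameter family of diffeomorphisms generated by the time-dependent vector field $-V(\f_{\sst D}(t))$ with $\psi_0=\Id$, and set $\f(t)\coloneqq\psi_t^*\f_{\sst D}(t)$. A direct computation then gives $\ddt\f(t)=\Delta_{\f(t)}\f(t)$, and closedness is automatic since for a closed 3-form the Hodge Laplacian reduces to $dd^*\f(t)$, which is exact. Uniqueness for \eqref{LapFlow} follows by reversing this correspondence: two solutions of the Laplacian flow would yield, after appropriate gauge-fixing, two solutions of the strictly parabolic modified flow, which must coincide.

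The main technical obstacle is the symbol computation itself. One has to understand, to leading order, how the metric $g_\f$, the Hodge operator $*_\f$, and hence the Laplacian $\Delta_\f$ depend nonlinearly on $\f$, in order to verify simultaneously that $\ker\sigma_\xi(\Delta_\f)$ consists exactly of the infinitesimal diffeomorphism directions and that the chosen DeTurck modification removes it. Once this is in hand, the remaining steps reduce to standard parabolic theory on the compact manifold $M$ together with routine gauge manipulations.
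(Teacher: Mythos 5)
The paper does not prove this statement; it is quoted from Bryant--Xu \cite{BrXu}, so your proposal has to be measured against their argument. Your overall strategy (gauge-fix by a DeTurck vector field, solve the modified flow, undo the gauge by pullback) is indeed the skeleton of \cite{BrXu}, but there is a genuine gap at the central analytic step: the claim that $\ker\sigma_\xi(\Delta_\f)$ consists exactly of the infinitesimal diffeomorphism directions, so that adding $\cL_{V(\f)}\f$ produces a \emph{strictly} parabolic quasilinear system. This is false. On closed forms one has $\Delta_\f\f=dd^*\f$, and the principal symbol of $dd^*$ at a covector $\xi$ acts on $\Lambda^3$ as $\chi\mapsto \xi\W\iota_{\xi^\#}\chi$ (up to sign), whose kernel at each point is $20$-dimensional (it contains all of $\Lambda^3(\ker\xi)$), far larger than the $7$-dimensional space of variations $\cL_X\f$. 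Consequently no choice of DeTurck vector field can remove the degeneracy on all of $\Omega^3_{\sst+}(M)$, and standard quasilinear parabolic theory never applies to the modified flow as you have set it up.

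The missing idea in \cite{BrXu} is to exploit that the flow preserves the closedness constraint and hence stays in the affine space $[\f]_{\sst+}$: they show that the gauge-fixed operator is elliptic with nonnegative symbol only \emph{in the direction of closed (equivalently exact) forms}, i.e., on the tangent space $d\Omega^2(M)$ of the cohomology class, and then invoke Hamilton's Nash--Moser-type existence and uniqueness theorem for evolution equations that are parabolic merely along such an integrable constraint set, rather than the classical theory of strictly parabolic systems. Uniqueness comes out of the same framework together with the gauge-untwisting you describe (which itself requires solving an auxiliary equation for the diffeomorphisms, a point you gloss over but which is routine). So your outline is recoverable, but only after replacing ``strictly parabolic, hence standard parabolic theory'' by ``parabolic in the direction of closed forms, hence Hamilton's Nash--Moser framework''; that replacement is the heart of the Bryant--Xu proof.
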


\begin{remark}
The condition $d\f(t)=0$ implies that the solution of \eqref{LapFlow} must belong to the open set  
\[
[\f]_{\sst+}\coloneqq[\f]\cap\Omega^3_{\sst+}(M)
\]
in the de Rham cohomology class of $\f$ as long as it exists.  
\end{remark}

By \cite[Thm.~1.6]{LotWei1}, the solution $\f(t)$ exists as long as the velocity of the flow $\left| \Delta_{\f(t)}\f(t)\right|_{\f(t)}$ remains bounded. 
Moreover, if $\f(t)$ is defined on some interval $[0,T]$, then for each fixed time $t\in(0,T],$ $(M,\f(t),g_{\f(t)})$ is real analytic \cite{LotWei3}. 

Using general results on flows of $\G_2$-structures (see e.g.~\cite{Bry,Kar}), 
it is possible to check that the evolution equation of the Riemannian metric $g_{\f(t)}$ induced by a $\G_2$-structure $\f(t)$ evolving under the Laplacian flow is given by
\begin{equation}\label{metricevol}
\ddt g_{\f(t)} = -2\,\mathrm{Ric}(g_{\f(t)}) +\frac{|\tau(t)|_{\f(t)}^2}{6}\,g_{\f(t)}+\frac14\,\mathrm{j}_{\f(t)}\left(*_{\f(t)}(\tau(t)\W\tau(t))\right), 
\end{equation}
(see also \cite{LotWei1}), and the corresponding volume form $dV_{\f(t)}$ evolves as follows
\[
\ddt dV_{\f(t)} = \frac{|\tau(t)|^2_{\f(t)}}{3}\,dV_{\f(t)}.
\]
In particular, $dV_{\f(t)}$ is pointwise non-decreasing.

A solution of the Laplacian flow is said to be {\em self-similar} if it is of the form  
\[
\f(t) = \varrho(t)\, F_t^*\f,
\]
where $F_t\in{\rm Dif{}f}(M)$ and $\varrho(t)\in\R\smallsetminus\{0\}$ is a scaling factor. 
A standard argument allows one to show that the solution of the Laplacian flow is self-similar if and only if the initial datum $\f$ satisfies the equation
\begin{equation}\label{LapSol}
\Delta_\f\f = \mathcal{L}_X\f + \lambda\f,
\end{equation}
for some vector field $X$ on $M$ and some $\lambda\in\R$ (see e.g.~\cite{Lin,LotWei1}). 
In such a case, $\varrho(t)=\left(1+\frac23\lambda t\right)^{\sst3/2}$.  
A closed $\G_2$-structure for which \eqref{LapSol} holds is called a {\em Laplacian soliton}. 
Depending on the sign of $\lambda$, a Laplacian soliton is said to be {\em shrinking} ($\lambda<0$), {\em steady} ($\lambda=0$), or {\em expanding} ($\lambda>0$), 
and the corresponding self-similar solution exists on the maximal time interval  
$\left(-\infty,-\frac{3}{2\lambda}\right)$, $(-\infty,+\infty)$, $\left(-\frac{3}{2\lambda},+\infty\right)$, respectively.

\section{Extremally Ricci-pinched closed G$_2$-structures}
Let $M$ be a compact 7-manifold endowed with a closed $\G_2$-structure $\f$. 
It was proved independently in \cite{Bry} and \cite{ClIv} that the Riemannian metric $g_\f$ cannot be Einstein unless $\f$ is torsion-free. 
Moreover, by \cite{Bry} the Ricci tensor $\mathrm{Ric}(g_\f)$ and the scalar curvature $\mathrm{Scal}(g_\f)$ of $g_\f$ must satisfy the integral inequality 
\begin{equation}\label{intineq}
\int_M\left[\mathrm{Scal}(g_\f)\right]^2dV_\f \leq 3 \int_M \left|\mathrm{Ric}(g_\f)\right|^2 dV_\f, 
\end{equation}
and \eqref{intineq} reduces to an equality if and only if the intrinsic torsion form $\tau$ fulfills
\begin{equation}\label{ERPcond}
d\tau = \frac16\, |\tau|_\f^2\,\f +\frac16\,*_\f(\tau\W\tau).
\end{equation}
This motivates the following.
\begin{definition}[\cite{Bry}]
A closed $\G_2$-structure $\f$ whose intrinsic torsion form $\tau$ satisfies \eqref{ERPcond} is said to be {\em extremally Ricci-pinched} ({\em ERP} for short). 
\end{definition}

Useful properties of ERP closed $\G_2$-structures can be derived starting from \eqref{ERPcond} (cf.~\cite[Sect.~4.6]{Bry}). We summarize some of them in the next proposition. 
\begin{proposition}[\cite{Bry}]\label{ERPprop}
Let $M$ be a 7-manifold endowed with an ERP closed $\G_2$-structure $\f$ with intrinsic torsion form $\tau\in\Omega^{2}_{14}(M)$ not identically vanishing.  
If $M$ is compact, then $\tau$ has constant (non-zero) norm. More generally, if $|\tau|_\f$ is constant, then the following results hold 
\begin{enumerate}[i)]
\item\label{i} $\tau\W\tau\W\tau=0$;
\item\label{iii} $\tau\W\tau$ is a non-zero closed simple 4-form of constant norm;  
\item\label{iv} $*_\f(\tau\W\tau)$ is a non-zero closed simple 3-form of constant norm;
\item\label{v} by points \ref{iii}) and \ref{iv}), the tangent bundle of $M$ splits into the orthogonal direct sum of two integrable subbundles $TM=P\oplus Q$ with
\[
P\coloneqq\left\{X\in TM\st \iota_{\sst X}(\tau\W\tau)=0 \right\},\quad Q\coloneqq\left\{X\in TM\st \iota_{\sst X}*_\f(\tau\W\tau)=0 \right\}.
\]
Moreover, the $P$-leaves are associative submanifolds calibrated by $-|\tau|_\f^{-\sst2}*_\f(\tau\W\tau)$, while the $Q$-leaves are coassociative submanifolds calibrated by 
$-|\tau|_\f^{-\sst2}(\tau\W\tau)$;
\item\label{vi} the Ricci tensor of $g_\f$ is given by $\mathrm{Ric}(g_\f) = \frac{1}{12}\,\jf(*_\f(\tau\W\tau))= -\frac16\, |\tau|_\f^2\, g_\f|_P$. 
Hence, it is non-positive with eigenvalues $-\frac16\, |\tau|_\f^2$ of multiplicity three and $0$ of multiplicity four. 
\end{enumerate}
\end{proposition}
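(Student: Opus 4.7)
The plan is to deduce all six statements from the ERP identity
\[
d\tau = \tfrac{1}{6}|\tau|_\f^2\,\f + \tfrac{1}{6}*_\f(\tau\wedge\tau)
\]
together with the coclosedness $d^*\tau = 0$ (a consequence of $\tau = d^*\f$ and $(d^*)^2=0$) and the characterizing pointwise relations $\tau\wedge\f = -*_\f\tau$ and $\tau\wedge*_\f\f = 0$ of $\Omega^2_{14}$.

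For the compactness statement, the plan is a Bochner-type argument. Apply $d^*$ to the ERP identity and use $d^*d\tau = \Delta\tau$ together with the Leibniz rule for $d^*(|\tau|_\f^2\,\f)$ and the identity $d^**_\f = \pm *_\f d$ for $d^**_\f(\tau\wedge\tau)$. Taking the $g_\f$-inner product with $\tau$ and integrating by parts on $M$ collects the gradient terms into an equality of the form $\int_M |\nabla|\tau|_\f^2|^2\,dV_\f = 0$, which forces $|\tau|_\f$ to be constant. Equivalently, one shows $\Delta|\tau|_\f^2\geq 0$ and invokes the maximum principle.

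With $|\tau|_\f$ constant, differentiating the ERP identity and using $d\f=0$ gives $d*_\f(\tau\wedge\tau) = 0$. For the closedness of $\tau\wedge\tau$, wedge the ERP identity with $\tau$ and use the pointwise identity $\tau\wedge *_\f(\tau\wedge\tau) = |\tau|_\f^2\,*_\f\tau$ for $\tau\in\Omega^2_{14}$ (verified directly in an adapted basis) to deduce $\tau\wedge d\tau = 0$, whence $d(\tau\wedge\tau)=0$. Claim i), $\tau\wedge\tau\wedge\tau=0$, is a pointwise algebraic fact about $\Omega^2_{14}$ which one checks in an adapted basis; it immediately yields simplicity of $\tau\wedge\tau$, since a 4-form $\omega$ in dimension seven is simple if and only if $\iota_X\omega\wedge\omega=0$ for every $X$, and $\iota_X(\tau\wedge\tau)\wedge(\tau\wedge\tau)=2\,\iota_X\tau\wedge\tau\wedge\tau$ vanishes by i). The constant-norm claims in iii) and iv) then follow from wedging the ERP identity with $\tau\wedge\tau$, which produces the pointwise identity $|\tau\wedge\tau|_\f^2 = |\tau|_\f^4$.

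For v), applying the Frobenius theorem to the closed simple forms $\tau\wedge\tau$ and $*_\f(\tau\wedge\tau)$ produces complementary integrable distributions $Q$ (rank 4) and $P$ (rank 3), mutually orthogonal by $*_\f$-duality; the calibration claims amount to checking the normalization pointwise on an adapted basis. Finally, vi) is obtained by substituting the ERP value of $d\tau$ into the Bryant formula
\[
\mathrm{Ric}(g_\f) = \tfrac{1}{4}|\tau|_\f^2\,g_\f - \tfrac{1}{4}\jf\!\left(d\tau - \tfrac{1}{2}*_\f(\tau\wedge\tau)\right);
\]
using $\jf(\f)=6\,g_\f$, the $|\tau|_\f^2\,g_\f$ terms cancel, leaving $\mathrm{Ric}(g_\f) = \tfrac{1}{12}\jf(*_\f(\tau\wedge\tau))$, and a last adapted-basis computation, using that $*_\f(\tau\wedge\tau)$ is up to sign and normalization the volume form of the $P$-leaves, yields the eigenvalue decomposition $-\tfrac{1}{6}|\tau|_\f^2\,g_\f|_P$. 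The main obstacle is the pointwise algebraic identity $\tau\wedge *_\f(\tau\wedge\tau) = |\tau|_\f^2\,*_\f\tau$ on $\Omega^2_{14}$, together with the closely related constraint $\tau^{\wedge 3}=0$; these underpin both the closedness of $\tau\wedge\tau$ and the norm relation $|\tau\wedge\tau|_\f^2 = |\tau|_\f^4$, and require a careful sign analysis in an adapted $\G_2$-basis.
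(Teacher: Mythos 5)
The paper does not actually prove this proposition---it is quoted from Bryant \cite[Sect.~4.6]{Bry}---so your argument can only be judged on its own terms, and it has a genuine gap at its foundation. You assert that $\tau\W\tau\W\tau=0$ is ``a pointwise algebraic fact about $\Omega^2_{14}$'' and likewise that $\tau\W *_\f(\tau\W\tau)=|\tau|_\f^2\,*_\f\tau$ holds for every element of $\Omega^2_{14}$. Both claims are false. The space $\Lambda^2_{14}\cong\mathfrak{g}_2\subset\mathfrak{so}(7)$ contains elements of rank six: in the adapted basis, $\beta=a_1e^{23}+a_2e^{45}+a_3e^{67}$ lies in $\Lambda^2_{14}$ exactly when $a_1+a_2+a_3=0$, and for such $\beta$ one computes $\beta^{\W 3}=6a_1a_2a_3\,e^{234567}$ and $*_\f(\beta\W\beta)\W\beta-|\beta|_\f^2*_\f\beta=-2a_1a_2a_3\,{*_\f(\iota_{e_1}\f)}$, both nonzero whenever $a_1a_2a_3\neq0$ (e.g.\ $(a_1,a_2,a_3)=(1,1,-2)$). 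So item i) is not linear algebra: it is a nontrivial consequence of the ERP equation (it says the torsion has rank four everywhere), and since your proofs of the closedness and simplicity of $\tau\W\tau$ rest on exactly these two identities, the core of i)--iv) is circular. (Note by contrast that $|\beta\W\beta|_\f^2=|\beta|_\f^4$ \emph{is} a genuine identity on all of $\Lambda^2_{14}$, forced by $a_1+a_2+a_3=0$.) A secondary problem: your decomposability criterion is vacuous, since $\iota_{\sst X}\omega\W\omega=\frac12\iota_{\sst X}(\omega\W\omega)=0$ for \emph{every} 4-form $\omega$ in dimension seven; the correct route is that a 2-form with $\tau^{\W3}=0$ has rank $\leq4$, whence $\tau\W\tau$ is automatically simple.

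The Bochner argument for the compact case also does not close as described. Pairing $\Delta_\f\tau=d^*d\tau$ with $\tau$ and integrating gives $\int_M\langle\Delta_\f\tau,\tau\rangle\,dV_\f=\int_M|d\tau|_\f^2\,dV_\f$, but both sides equal $\frac16\int_M|\tau|_\f^4\,dV_\f$ by pure pointwise algebra: the gradient term $\iota_{\sst\grad|\tau|_\f^2}\f$ arising from $d^*(|\tau|_\f^2\f)$ lies in $\Omega^2_7$ and is therefore $g_\f$-orthogonal to $\tau\in\Omega^2_{14}$, while $\langle *_\f(*_\f(\tau\W\tau)\W\tau),\tau\rangle\,dV_\f=|\tau\W\tau|_\f^2\,dV_\f=|\tau|_\f^4\,dV_\f$ cancels against the corresponding term in $|d\tau|_\f^2=\frac16|\tau|_\f^4$. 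The identity you obtain is $0=0$; no term $\int_M|\nabla|\tau|_\f^2|^2\,dV_\f$ survives. Establishing constancy of $|\tau|_\f$ on a compact manifold requires a different mechanism (Bryant exploits the consequence $d(|\tau|_\f^2)\W\f=-d*_\f(\tau\W\tau)$ of $d^2\tau=0$ together with further second-order identities), and the same circle of ideas is what yields $\tau^{\W3}=0$. Parts v) and vi) of your sketch are fine \emph{given} i)--iv), so the task is to repair the two points above rather than the overall architecture.
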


Further results on ERP closed $\G_2$-structures were obtained by Cleyton and Ivanov in \cite{ClIv2}. 
We shall recall one of them in Section \ref{ExSect}.

\begin{remark}\label{remnormdtau}
It follows from \eqref{ERPcond} and the identity $|\tau\W\tau|_\f^2=|\tau|^4_\f$ (cf.~\cite[(2.21)]{Bry}) that $d\tau$ has constant norm whenever $|\tau|_\f$ is constant, indeed
\[
|d\tau|_\f^2 = \frac{1}{6}|\tau|_\f^4.
\]
Moreover, as $\tau$ is coclosed and $d(\tau\W\tau)=0$, we have
\[
\Delta_\f\tau = -*_\f d *_\f d\tau = \frac{1}{6}|\tau|_\f^2\,\tau. 
\]
\end{remark}

\section{The Laplacian flow starting from an ERP closed G$_2$-structure}\label{LaplacianERP}
In this section, we prove the following result. 
\begin{theorem}\label{MainThm}
Let $M$ be a seven-dimensional manifold endowed with an ERP closed $\G_2$-structure $\f$ whose intrinsic torsion form $\tau$ has constant non-zero norm.  
Then, the solution of the Laplacian flow starting from $\f$ at $t=0$ is 
\begin{equation}\label{ERPsol}
\f(t) = \f + f(t)\,d\tau,
\end{equation}
where  
\[
f(t) = \frac{6}{|\tau|_\f^2}\left(\exp\left(\frac{|\tau|_\f^2}{6}\,t\right)-1\right). 
\]
In particular, $\f(t)$ is defined for all real times, it is ERP, and the corresponding intrinsic torsion form is given by
\[
\tau(t) = \exp\left(\frac{|\tau|_\f^2}{6}\,t\right)\tau.
\] 
Finally, the Ricci tensor of the Riemannian metric $g_{\f(t)}$ induced by $\f(t)$ is constant along the flow, i.e., $\mathrm{Ric}(g_{\f(t)})=\mathrm{Ric}(g_\f)$, 
and $g_{\f(t)}$ evolves as follows 
\[
\ddt g_{\f(t)} = \left.\frac{|\tau|^2_\f}{6}\,g_{\f(t)}\right|_Q. 
\]
\end{theorem}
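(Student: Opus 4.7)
My plan is to verify the explicit ansatz $\f(t) = \f + f(t)\,d\tau$ with a suitable scalar function $f$. Since $d\tau$ is exact, this 3-form is automatically closed for any smooth $f$ with $f(0)=0$, so the Laplacian flow \eqref{LapFlow} reduces to a single scalar ODE for $f$. First, I would substitute the ERP identity $d\tau = \tfrac16|\tau|_\f^2\f + \tfrac16 *_\f(\tau\W\tau)$ and use Proposition \ref{ERPprop}(iv)--(v), which gives $*_\f(\tau\W\tau) = -|\tau|_\f^2\,\f_P$, where $\f_P$ is the volume form of the 3-dimensional associative distribution $P$. This rewrites the ansatz as
\[
\f(t) = \f_P + \rho(t)\,\f_m, \qquad \rho(t) := 1+\tfrac{|\tau|_\f^2}{6}f(t), \qquad \f_m := \f-\f_P.
\]

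Next, I would work pointwise in a local $\G_2$-adapted coframe aligned with the orthogonal splitting $TM = P\oplus Q$. A direct check then shows that $\f(t)$ is stable of $\G_2$-type whenever $\rho(t)>0$, with induced metric, volume form, and dual 4-form given by
\[
g_{\f(t)} = g_\f|_P + \rho(t)\,g_\f|_Q, \quad dV_{\f(t)} = \rho(t)^2\,dV_\f, \quad *_{\f(t)}\f(t) = \rho(t)^2\psi_Q + \rho(t)\psi_m,
\]
where $\psi_Q$ and $\psi_m$ are the $Q$-part and mixed part of $\psi := *_\f\f$.

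The crux of the argument is identifying the intrinsic torsion of $\f(t)$. By Proposition \ref{ERPprop}(iv), $\psi_Q = -|\tau|_\f^{-2}\,\tau\W\tau$, which is closed because $\tau\W\tau$ is closed and $|\tau|_\f$ is constant. Hence
\[
d*_{\f(t)}\f(t) = \rho(t)\,d\psi_m = \rho(t)\,d\psi = \rho(t)\,\tau\W\f.
\]
I would then appeal to the refined structural results of Proposition \ref{PrepProp}---in particular that $\tau$ takes values in $\Lambda^2 Q^*$, which combined with $\tau\in\Omega^2_{14}$ forces $\tau\W\f_m = 0$---to show that $\tau(t) := \rho(t)\tau$ belongs to $\Omega^2_{14}(M, g_{\f(t)})$ and satisfies $d*_{\f(t)}\f(t) = \tau(t)\W\f(t)$, so that $\tau(t)$ is the torsion form of $\f(t)$. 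The Laplacian flow then becomes the first-order linear Cauchy problem
\[
f'(t) = \rho(t) = 1+\tfrac{|\tau|_\f^2}{6}\,f(t), \qquad f(0) = 0,
\]
whose unique solution is the one in the statement; consequently $\rho(t) = \exp(\tfrac{|\tau|_\f^2}{6}t)$ is positive for all $t\in\R$, giving long-time existence.

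The remaining claims would then follow by inspection. Since $\tau\in\Lambda^2 Q^*$, its norm in the rescaled metric scales as $|\tau|^2_{\f(t)} = \rho(t)^{-2}|\tau|_\f^2$, so $|\tau(t)|^2_{\f(t)} = |\tau|_\f^2$ is constant; this yields constant flow velocity and, after substitution into the ERP equation for $(\f(t),\tau(t))$, shows that $\f(t)$ remains ERP. The metric evolution $\ddt g_{\f(t)} = \tfrac{|\tau|_\f^2}{6}g_{\f(t)}|_Q$ is immediate from differentiating the expression for $g_{\f(t)}$. Finally, since $\tau(t)\W\tau(t) = \rho(t)^2\,\tau\W\tau$ the splitting $TM = P\oplus Q$ is preserved along the flow; combined with $g_{\f(t)}|_P = g_\f|_P$ and the constancy of $|\tau(t)|_{\f(t)}$, the formula in Proposition \ref{ERPprop}(vi) yields $\mathrm{Ric}(g_{\f(t)}) = \mathrm{Ric}(g_\f)$. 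The main obstacle is the torsion identification: without the refined structural results of Proposition \ref{PrepProp} (notably $\tau\in\Lambda^2 Q^*$, or equivalently $\tau\W d\tau = 0$), the reduction of the full Laplacian flow PDE to a scalar ODE for $f$ is not transparent.
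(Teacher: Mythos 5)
Your proposal is correct and follows essentially the same route as the paper: your pointwise adapted-frame computations for $\f+f(t)\,d\tau$ (stability for $\rho>0$, the metric $g_\f|_P+\rho\,g_\f|_Q$, the Hodge dual, and the torsion identification via $d(\tau\W\tau)=0$ and $\tau\W d\tau=0$) are exactly the content of Proposition \ref{PrepProp}, after which both arguments reduce the flow to the same scalar Cauchy problem $f'=1+\tfrac16|\tau|_\f^2 f$, $f(0)=0$. The only cosmetic difference is that you obtain the metric evolution by differentiating $g_{\f(t)}=g_\f|_P+\rho(t)\,g_\f|_Q$ directly, whereas the paper derives it from the general evolution equation \eqref{metricevol} together with $\tfrac14\,\jf(*_\f(\tau\W\tau))=3\,\mathrm{Ric}(g_\f)$.
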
 

The proof of the first assertion in Theorem \ref{MainThm} consists in showing that the closed 3-form $\f(t)$ given by \eqref{ERPsol} defines a $\G_2$-structure and that the $\G_2$-Laplacian flow \eqref{LapFlow} for 
$\f(t)$ is equivalent to a Cauchy problem for the function $f(t)$. 
To this aim, it is useful to investigate the properties of the 3-form
\begin{equation}\label{ERPvariation}
\tf\coloneqq \f + a\,d\tau = \left(1+\frac16\, |\tau|_\f^2\, a \right)\f +\frac{a}{6}\,*_\f(\tau\W\tau),
\end{equation}
where $\f$ is an ERP closed $\G_2$-structure with intrinsic torsion form $\tau$ of constant norm, and $a$ is a real number. We collect them in the next result. 
\begin{proposition}\label{PrepProp}
Let $\f$ be an ERP closed $\G_2$-structure, assume that its intrinsic torsion form $\tau$ has constant norm, and consider the closed 3-form $\tf$ given by \eqref{ERPvariation}. 
Then, $\tf$ defines a closed $\G_2$-structure for all $a>-6|\tau|_\f^{-2}$.  
Whenever this happens, the following hold
\begin{enumerate}[1)]
\item\label{1} the $g_\f$-orthogonal decomposition $TM=P\oplus Q$ given in point \ref{v}) of Proposition \ref{ERPprop} is also $g_{\tf}$-orthogonal. 
Moreover, $\left.g_{\tf}\right|_P = \left.g_\f\right|_P$, and  $\left.g_{\tf}\right|_Q = \left(1+\frac16|\tau|_\f^2a\right) g_\f|_Q$;
\smallskip
\item\label{2} the volume form induced by $\tf$ is $dV_{\tf} = \left(1+\frac16|\tau|_\f^2a\right)^2dV_\f$;
\smallskip
\item\label{3} the Hodge dual of $\tf$ is $*_{\tf}\tf = \left(1+\frac16|\tau|_\f^2a\right)\left(*_\f\f -\frac{a}{6}\,\tau\W\tau\right)$;
\smallskip
\item\label{4} the intrinsic torsion form of $\tf$ is given by $\widetilde{\tau} = \left(1+\frac16|\tau|_\f^2a\right)\tau$, 
it satisfies the identity $*_{\tf}(\widetilde\tau\W\widetilde\tau) = *_\f(\tau\W\tau)$,   
and its $g_{\tf}$-norm coincides with the $g_\f$-norm of $\tau$, i.e., $|\widetilde\tau|_{\tf} = |\tau|_\f$. 
Consequently, $\mathrm{Scal}(g_{\tf}) = \mathrm{Scal}(g_\f)$; 
\smallskip
\item\label{5} the closed $\G_2$-structure $\tf$ is ERP and $\mathrm{Ric}(g_{\tf}) = \mathrm{Ric}(g_\f)$. 
\end{enumerate}
\end{proposition}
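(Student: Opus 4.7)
My strategy is to work in a $g_\f$-orthonormal coframe $(e^1,\ldots,e^7)$ at an arbitrary point of $M$ which is simultaneously adapted to $\f$ and to the splitting $TM = P\oplus Q$ from Proposition~\ref{ERPprop}(\ref{v}). Such a coframe can be chosen with $e^1,e^2,e^3$ spanning $P^*$, $e^4,\ldots,e^7$ spanning $Q^*$, and $\f$ in the standard form~\eqref{G2adapted}; the calibration statement in Proposition~\ref{ERPprop}(\ref{v}) then forces
\[
*_\f(\tau\wedge\tau) = -|\tau|_\f^2\, e^{123}, \qquad \tau\wedge\tau = -|\tau|_\f^2\, e^{4567}.
\]
Setting $\lambda := 1+\tfrac16|\tau|_\f^2 a$, expression~\eqref{ERPvariation} becomes
\[
\tf = e^{123} + \lambda\bigl(e^{145}+e^{167}+e^{246}-e^{257}-e^{347}-e^{356}\bigr).
\]
Whenever $\lambda>0$, equivalently $a>-6|\tau|_\f^{-2}$, I rescale to a new coframe by $\widetilde{e}^i := e^i$ for $i\le 3$ and $\widetilde{e}^i := \lambda^{1/2}e^i$ for $i\ge 4$. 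Since each of the six non-$e^{123}$ summands above carries two indices $\geq 4$, a term-by-term inspection shows that $\tf$ reads exactly as~\eqref{G2adapted} in $(\widetilde{e}^i)$, which simultaneously proves $\tf\in\Omega^3_{\sst+}(M)$ and identifies $(\widetilde{e}^i)$ as a $g_\tf$-orthonormal coframe. Properties~\ref{1}) and~\ref{2}) follow immediately from the scaling rule, and property~\ref{3}) comes from expanding the standard formula for $*\f$ in the rescaled coframe, using $\widetilde{e}^{4567}=\lambda^2 e^{4567}$ while all six remaining terms scale by $\lambda$.

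For property~\ref{4}), differentiating the expression in~\ref{3}) gives
\[
d*_\tf\tf = \lambda\bigl(d*_\f\f - \tfrac{a}{6}\,d(\tau\wedge\tau)\bigr) = \lambda\,\tau\wedge\f,
\]
since $\tau\wedge\tau$ is closed by Proposition~\ref{ERPprop}(\ref{iii}). I then set $\widetilde\tau:=\lambda\tau$ and verify the two defining conditions for it to be the intrinsic torsion form of $\tf$. First, $\widetilde\tau\in\Omega^2_{14}(\tf)$, because
\[
\widetilde\tau\wedge *_\tf\tf = \lambda^2\bigl(\tau\wedge *_\f\f - \tfrac{a}{6}\,\tau\wedge\tau\wedge\tau\bigr) = 0,
\]
using $\tau\in\Omega^2_{14}(\f)$ and $\tau\wedge\tau\wedge\tau=0$ from Proposition~\ref{ERPprop}(\ref{i}). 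Second, $\widetilde\tau\wedge\tf = \lambda\,\tau\wedge\f + \lambda a\,\tau\wedge d\tau$, and matching this with $d*_\tf\tf$ reduces to $\tau\wedge d\tau=0$; this is automatic since $0 = d(\tau\wedge\tau) = 2\,\tau\wedge d\tau$. The identities $*_\tf(\widetilde\tau\wedge\widetilde\tau)=*_\f(\tau\wedge\tau)$ and $|\widetilde\tau|_\tf=|\tau|_\f$ then follow by direct computation in the rescaled coframe, and $\mathrm{Scal}(g_\tf)=\mathrm{Scal}(g_\f)$ is a consequence of Bryant's formula $\mathrm{Scal}(g_\tf)=-\tfrac12|\widetilde\tau|_\tf^2$.

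Finally, property~\ref{5}) is a short algebraic check: substituting $\widetilde\tau=\lambda\tau$, $|\widetilde\tau|_\tf=|\tau|_\f$, $*_\tf(\widetilde\tau\wedge\widetilde\tau)=*_\f(\tau\wedge\tau)$ and $\tf=\f+a\,d\tau$ into the ERP equation for $\tf$ and using ERP for $\f$ collapses both sides to $\lambda\,d\tau$. The Ricci tensor comparison is then immediate: by Proposition~\ref{ERPprop}(\ref{vi}) applied to $\tf$,
\[
\mathrm{Ric}(g_\tf) = -\tfrac16|\widetilde\tau|_\tf^2\,g_\tf|_P = -\tfrac16|\tau|_\f^2\,g_\f|_P = \mathrm{Ric}(g_\f),
\]
using $g_\tf|_P=g_\f|_P$ from property~\ref{1}). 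The main delicate point I anticipate is the initial set-up: building the coframe adapted to both $\f$ and the $P\oplus Q$ splitting and pinning down the correct signs of the simple forms $\tau\wedge\tau$ and $*_\f(\tau\wedge\tau)$. Once that is in place, the entire argument is just bookkeeping of the scaling factor $\lambda$ between the two coframes.
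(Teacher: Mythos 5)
Your proof is correct and follows essentially the same route as the paper: both arguments reduce everything to a pointwise computation in a $g_\f$-adapted coframe with $P=\langle e_1,e_2,e_3\rangle$, $Q=\langle e_4,\ldots,e_7\rangle$ and $\tau\wedge\tau=-|\tau|_\f^2\,e^{4567}$, then read off $g_{\tf}$, $dV_{\tf}$ and $*_{\tf}\tf$ and verify \ref{4}) and \ref{5}) exactly as you do. The only cosmetic difference is that you obtain positivity and the metric via the rescaled coframe $\widetilde e^i=\lambda^{1/2}e^i$ ($i\geq4$), putting $\tf$ in the normal form \eqref{G2adapted}, whereas the paper computes $b_{\tf}$ and $\det(b_{\tf})^{1/9}$ directly; the content is the same.
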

\begin{proof}
We begin showing that $\tf$ is stable for all $a\neq-6|\tau|_\f^{-2}$ and that it defines a $\G_2$-structure for all $a>-6|\tau|_\f^{-2}$. 
What we need to prove is that $\tf|_x\in\Lambda^3_{\sst+}(T_xM)$ for all $x\in M.$ 
Let us fix a basis $(e_1,\ldots,e_7)$ of $T_xM$ whose dual basis $(e^1,\ldots,e^7)$ of $T^*_xM$ is an adapted basis for the closed $\G_2$-structure $\f$. 
Then, we have
\[
\f|_x = e^{123}+e^{145}+e^{167}+e^{246}-e^{257}-e^{347}-e^{356},
\]
and $g_{\f}|_x = \sum_{i=1}^7(e^i)^2$. By point \ref{v}) of Proposition \ref{ERPprop}, we know that $T_xM=P_x\oplus Q_x$.  
Without loss of generality, we may assume that $P_x = \langle e_1,e_2,e_3\rangle$ and $Q_x=\langle e_4,e_5,e_6,e_7\rangle$. 
Since $P=\mathrm{ker}(\tau\W\tau)$, the simple 4-form $\tau\W\tau|_x$ must be proportional to $e^{4567}$. This implies that $\tau|_x \in\Lambda^2(Q_x^*)$. 
Consequently, as $\tau\W*_\f\f=0$, there exist some real numbers $c_1,c_2,c_3$ for which
\[
\tau|_x = c_1\left(e^{45}-e^{67}\right) +c_2\left(e^{46}+e^{57}\right) +c_3\left(e^{47}-e^{56}\right), 
\]
 In particular, $(\tau\W\tau)|_x = -|\tau|_\f^2\,e^{4567}$ and $*_\f(\tau\W\tau)|_x = -|\tau|_\f^2\,e^{123}$. 

We can now compute the symmetric bilinear map $b_{\tf}|_x:T_xM\times T_xM\rightarrow\Lambda^7(T_x^*M)$ and see that 
\[
\det(b_{\tf}|_x)^\frac19 = \left(1+\frac16|\tau|_\f^2a\right)^2 \det(b_{\f}|_x)^{\frac19}.
\]
Thus, the 3-form $\tf|_x$ is stable if and only if $a\neq-6|\tau|_\f^{-2}$. Moreover, we have that 
\[
g_{\tf}|_x = \det(b_{\tf}|_x)^{-\frac19} \,b_{\tf}|_x = \sum_{i=1}^3(e^i)^2+\left(1+\frac16|\tau|_\f^2a\right) \sum_{i=4}^7(e^i)^2. 
\]
Hence, $\tf|_x\in\Lambda^3_{\sst+}(T^*_xM)$ if and only if $a>-6|\tau|_\f^{-2}$, as we claimed. 

Now, assertions \ref{1}) and \ref{2}) follow immediately from the above discussion, 
while assertion \ref{3}) can be checked pointwise using the adapted basis for $\f$ we are considering. 

As for \ref{4}), we have 
\[
d*_{\tf}\tf = \left(1+\frac16|\tau|_\f^2a\right)\left(d*_\f\f -\frac{a}{6}\,d(\tau\W\tau)\right) = \left(1+\frac16|\tau|_\f^2a\right) \tau\W\tf,
\]
since $\tau\W\tau$ is closed. Thus, by the uniqueness of the 2-form $\widetilde\tau$ for which $d*_{\tf}\tf=\widetilde\tau\W\tf$, we obtain 
$\widetilde\tau=\left(1+\frac16|\tau|_\f^2a\right) \tau$. We can compute the Hodge dual of $\widetilde\tau$ as follows
\[
*_{\tf}\,\widetilde\tau  = -\widetilde\tau\W\tf = -\left(1+\frac16|\tau|_\f^2a\right)\tau\W\f = \left(1+\frac16|\tau|_\f^2a\right) *_\f\tau. 
\]
Consequently, we get
\[
|\widetilde\tau|_{\tf}^2\, dV_{\tf} = \widetilde\tau \W *_{\tf}\,\widetilde\tau = \left(1+\frac16|\tau|_\f^2a\right)^2 |\tau|_\f^2\,dV_\f = |\tau|_\f^2\, dV_{\tf}. 
\]

Finally, a pointwise computation as in \ref{3}) allows us to show that $*_{\tf}(\widetilde\tau\W\widetilde\tau) = *_\f(\tau\W\tau)$. Using this, it is straightforward to check that $\tf$ is ERP. 
Now, the integrable subbundles $\widetilde{P}$ and $\widetilde{Q}$ determined by the ERP closed $\G_2$-structure $\tf$ coincide with those determined by $\f$.  
Consequently, by point \ref{vi}) of Proposition \ref{ERPprop} and point \ref{1}) above, we have
\[
\mathrm{Ric}(g_{\tf}) = -\frac{1}{6}|\tau|_\f^2\,g_{\tf}|_{\widetilde{P}} = -\frac{1}{6}|\tau|_\f^2\,g_{\tf}|_{P} =  -\frac{1}{6}|\tau|_\f^2\,g_\f|_{P} = \mathrm{Ric}(g_\f).   
\]
\end{proof}

We are now ready to prove the main theorem of this section. 
\begin{proof}[Proof of Theorem \ref{MainThm}]
Consider the closed 3-form $\f(t)=\f + f(t)d\tau$, where $f$ is a real valued smooth function such that $f(0)=0$. 
Let $t$ be small enough so that $f(t)>-6|\tau|_\f^{-2}$. Then, by Proposition \ref{PrepProp} the 3-form $\f(t)$ defines a closed $\G_2$-structure with intrinsic torsion form 
$\tau(t) =  \left(1+\frac16|\tau|_\f^2f(t)\right)\tau$. Now, the Laplacian flow equation $\frac{\partial}{\partial t}\f(t)=\Delta_{\f(t)}\f(t)$ reads
\[
\frac{d}{dt}f(t)\, d\tau = \left(1+\frac16|\tau|_\f^2f(t)\right) d\tau.
\] 
Consequently, $\f(t)$ is the solution of the Laplacian flow starting from $\f$ at $t=0$ if and only if $f(t)$ solves the Cauchy problem
\[
\begin{cases}
\frac{d}{dt}f(t) = 1+\frac16|\tau|_\f^2f(t),\\
f(0)=0. 
\end{cases}
\]
Thus, we have 
\[
f(t) = \frac{6}{|\tau|_\f^2}\left(\exp\left(\frac{|\tau|_\f^2}{6}\,t\right)-1\right), 
\]
whence we see that $f(t)$ is defined for all $t\in\R$ and that it satisfies the condition $f(t)>-6|\tau|_\f^{-2}$. 
The second part of the theorem follows immediately from points \ref{4}) and \ref{5}) of Proposition \ref{PrepProp}. 
In particular, the evolution equation of the metric $g_{\f(t)}$ can be obtained starting from equation \eqref{metricevol} and using the identity 
$\frac{1}{4}\,\jf(*_\f(\tau\W\tau)) = 3\,\mathrm{Ric}(g_\f)$ (cf.~point \ref{vi}) of Proposition \ref{ERPprop}). 
\end{proof}

When $M$ is compact, backward uniqueness and real analyticity of the solution of the Laplacian flow (cf.~\cite[Thm.~1.4]{LotWei1} and \cite{LotWei3})  
together with Theorem \ref{MainThm} imply the following. 
\begin{corollary}
Let $\f(t)$ be the solution of the Laplacian flow \eqref{LapFlow} on a compact 7-manifold $M$ and assume that $\f(0)$ is not ERP. 
Then, $\f(t)$ cannot become ERP in finite time. 
\end{corollary}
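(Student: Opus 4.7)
The plan is to argue by contradiction. Suppose that $\f(t_0)$ is ERP for some $t_0 > 0$ in the maximal interval of existence of the solution to \eqref{LapFlow}, and derive that $\f(0)$ itself must be ERP.

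First I would produce an explicit ``competitor'' solution through $\f(t_0)$. Since $M$ is compact, Proposition \ref{ERPprop} tells us that $|\tau(t_0)|_{\f(t_0)}$ is constant on $M$: either identically zero, in which case $\f(t_0)$ is torsion-free, or a positive constant. In the latter case the hypotheses of Theorem \ref{MainThm} are met at time $t_0$, producing the eternal Laplacian flow trajectory
\[
\hat\f(t) \coloneqq \f(t_0) + f(t-t_0)\,d\tau(t_0),\qquad t\in\R,
\]
which is ERP at every time and which satisfies $\hat\f(t_0)=\f(t_0)$. In the torsion-free case, the constant curve $\hat\f(t)\equiv \f(t_0)$ plays the same role. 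In either case, $\hat\f(t)$ is defined on all of $[0,t_0]$ and agrees with $\f(t)$ at the terminal time $t=t_0$.

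Next I would apply backward uniqueness of the Laplacian flow on compact 7-manifolds (\cite[Thm.~1.4]{LotWei1}, which relies on the spatial real analyticity of \cite{LotWei3}): two solutions of \eqref{LapFlow} defined on $[0,t_0]$ and coinciding at $t_0$ must coincide throughout $[0,t_0]$. Applied to $\f$ and $\hat\f$, this forces $\f(0)=\hat\f(0)$. But $\hat\f(0)$ is ERP by Theorem \ref{MainThm} (or trivially, in the torsion-free case), contradicting the hypothesis that $\f(0)$ is not ERP.

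The substantive step is the first one: having an \emph{eternal} explicit solution through the putative ERP time is exactly what makes the backward uniqueness comparison legitimate on the whole interval $[0,t_0]$, so Theorem \ref{MainThm} does the real work and backward uniqueness plus analyticity serve only to match the candidate against the actual solution. I expect no obstacle beyond cleanly unpacking the two cases and invoking the cited results.
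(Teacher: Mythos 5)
Your proposal is correct and follows essentially the same route as the paper, which derives the corollary precisely from Theorem \ref{MainThm} (providing the eternal ERP competitor through the putative ERP time) combined with the backward uniqueness of the flow on compact manifolds from \cite{LotWei1,LotWei3}. Your explicit handling of the torsion-free subcase and the time-shifted application of Theorem \ref{MainThm} is a faithful unpacking of the argument the paper leaves implicit.
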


Furthermore, from point \ref{4}) of Proposition \ref{ERPvariation} and Remark \ref{remnormdtau} we see that the velocity of the flow is constant for all $t\in\R$:
\[
\left|\Delta_{\f(t)}\f(t)\right|_{\f(t)} = |d\tau(t)|_{\f(t)} = \frac{1}{\sqrt{6}}|\tau|^2_{\f}.
\]

Since the solution $\f(t)$ of the Laplacian flow starting from an ERP closed $\G_2$-structure $\f$ exists for all real times,  
and since the Ricci tensor of the corresponding metric $g_{\f(t)}$ does not evolve along the flow, 
it is natural to ask whether $\f(t)$ is self-similar or, equivalently, if $\f$ is a steady Laplacian soliton. 
It follows from \cite[Cor.~1]{Lin} that the answer is negative when the manifold $M$ is compact (see also \cite[Prop.~9.5]{LotWei1}). In detail:
\begin{theorem}[\cite{Lin}]
Let $M$ be a compact 7-manifold. Then, the only steady Laplacian solitons on $M$ are given by torsion-free $\G_2$-structures.
\end{theorem}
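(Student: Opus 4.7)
The plan is to derive a contradiction from two independent computations of $\frac{d}{dt}\int_M dV_{\f(t)}$ along the self-similar solution of the Laplacian flow issued from a steady soliton: on one hand this solution consists of pullbacks by diffeomorphisms, so the total volume is constant in $t$; on the other hand the Laplacian flow is pointwise strictly volume-expanding wherever $\tau$ does not vanish.

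Concretely, suppose $\f$ is a steady Laplacian soliton, so that $\Delta_\f\f = \mathcal{L}_X\f$ for some $X\in\mathfrak{X}(M)$. By the discussion recalled in Section 2, when $\lambda=0$ the associated self-similar solution of \eqref{LapFlow} is $\f(t)=F_t^*\f$, where $\{F_t\}_{t\in\R}$ is the global flow of $X$ (which exists for all $t$ because $M$ is compact). Since the assignment $\f\mapsto g_\f$ is natural with respect to diffeomorphisms, we have $g_{\f(t)}=F_t^*g_\f$, and consequently $dV_{\f(t)}=F_t^*dV_\f$. The change-of-variables formula applied to the orientation-preserving diffeomorphisms $F_t$ then yields $\int_M dV_{\f(t)} = \int_M F_t^* dV_\f = \int_M dV_\f$, so $\frac{d}{dt}\int_M dV_{\f(t)} \equiv 0$. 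On the other hand, integrating the evolution equation $\frac{d}{dt}dV_{\f(t)}=\frac{|\tau(t)|^2_{\f(t)}}{3}\,dV_{\f(t)}$ recalled in Section 2 gives
\[
\frac{d}{dt}\int_M dV_{\f(t)} \,=\, \frac{1}{3}\int_M |\tau(t)|_{\f(t)}^2\,dV_{\f(t)}.
\]
Comparing the two expressions at $t=0$ forces $\int_M |\tau|_\f^2\,dV_\f = 0$, whence $\tau\equiv 0$, i.e.\ $\f$ is torsion-free. The converse is trivial: any torsion-free $\f$ has $\Delta_\f\f=d\tau=0$ and therefore satisfies the soliton equation with $X=0$ and $\lambda=0$.

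The argument is structurally quite simple; the only genuinely non-routine insight is noticing the incompatibility between diffeomorphism invariance of the total volume and the strictly non-negative integrand $\frac{1}{3}|\tau|_\f^2$. No curvature, Hodge, or maximum-principle estimates enter. Compactness of $M$ is used twice, and both uses are essential: it guarantees that the flow $\{F_t\}$ of $X$ is defined for all $t\in\R$, and it ensures that the change-of-variables identity $\int_M F_t^*dV_\f = \int_M dV_\f$ holds globally with no boundary contributions. Dropping compactness, either failure mode can occur (non-compact steady solitons with non-vanishing torsion do exist, as Bryant's and Lauret's examples discussed in Section \ref{ExSect} illustrate).
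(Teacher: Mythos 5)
Your proof is correct, and it takes a genuinely different route from the one the paper relies on (the statement is quoted from \cite{Lin}; see also \cite[Prop.~9.5]{LotWei1}, and the paper gives no proof of its own). The cited argument is algebraic: pair the soliton equation $\Delta_\f\f=\mathcal{L}_X\f+\lambda\f$ with $\f$ and integrate, using $\int_M\langle\Delta_\f\f,\f\rangle\,dV_\f=\int_M|d^*\f|^2\,dV_\f=\int_M|\tau|^2\,dV_\f$ and $\int_M\langle\mathcal{L}_X\f,\f\rangle\,dV_\f=\int_M\langle\iota_X\f,\tau\rangle\,dV_\f=0$, the latter because $\iota_X\f\in\Omega^2_7(M)$ is pointwise orthogonal to $\tau\in\Omega^2_{14}(M)$; this yields $\int_M|\tau|^2\,dV_\f=7\lambda\,\mathrm{Vol}_{g_\f}(M)$, which for $\lambda=0$ gives $\tau\equiv0$ and in addition rules out compact shrinkers. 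Your argument is dynamical instead, pitting diffeomorphism invariance of the total volume against the monotonicity $\ddt dV_{\f(t)}=\frac{|\tau(t)|^2_{\f(t)}}{3}dV_{\f(t)}$; it is valid, and two small points would make it airtight. First, the claim that the self-similar solution is exactly $\f(t)=F_t^*\f$ with $F_t$ the (complete, since $M$ is compact) flow of $X$ deserves its one-line verification, $\ddt F_t^*\f=F_t^*\mathcal{L}_X\f=F_t^*\Delta_\f\f=\Delta_{F_t^*\f}F_t^*\f$, which uses naturality of $\f\mapsto g_\f$ and of the Hodge Laplacian under pullback; note that once this is checked you do not even need to invoke uniqueness from \cite{BrXu} --- both volume computations are applied to the explicit solution you exhibited. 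Second, your method actually recovers Lin's sharper identity if pushed: for general $\lambda$ one has $dV_{\f(t)}=\varrho(t)^{7/3}F_t^*dV_\f$, and differentiating at $t=0$ again gives $7\lambda\,\mathrm{Vol}_{g_\f}(M)=\int_M|\tau|^2\,dV_\f$. The trade-off is that the integration-by-parts proof needs only the type decomposition of 2-forms, while yours needs the volume evolution equation and the self-similar form of the solution; both uses of compactness you identify are indeed the essential ones, consistent with the non-compact examples of Sections \ref{LaplacianERP} and \ref{ExSect}.
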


On the other hand, it was recently shown in \cite{LaNi} that any left-invariant ERP closed $\G_2$-structure on a non-compact Lie group  is a steady Laplacian soliton.  
The converse of this result does not hold, as there are examples of left-invariant steady Laplacian solitons that are not ERP, see \cite{FiRa3}. 

\section{Asymptotic behaviour of the ERP solution}\label{AsBe}
In this section, we assume that the 7-manifold $M$ is compact and we investigate the behaviour of the ERP solution when $t\rightarrow\pm\infty$. 

Using point \ref{2}) of Proposition \ref{PrepProp}, we obtain the following expression for the volume form induced by $\f(t)$ 
\[
dV_{\f(t)} = \exp\left(\frac{|\tau|^2_\f}{3}\,t\right) dV_\f. 
\]
Consequently, the total volume of the compact 7-manifold $M$ with respect to the metric $g_{\f(t)}$ is
\[
\mathrm{Vol}_{g_{\f(t)}}(M) = \int_M dV_{\f(t)} =   \exp\left(\frac{|\tau|^2_\f}{3}\,t\right) \mathrm{Vol}_{g_\f}(M), 
\]
whence we easily see that it increases without bound as $t$ goes to $+\infty$, while it shrinks as $t$ goes to $-\infty$:
\[
\lim_{t\rightarrow+\infty}\mathrm{Vol}_{g_{\f(t)}}(M)  = +\infty,\qquad \lim_{t\rightarrow-\infty}\mathrm{Vol}_{g_{\f(t)}}(M)  = 0.
\]

We now study the behaviour of the associative $P$-leaves and coassociative $Q$-leaves along the flow. 
As the integrable subbundles $P(t)$ and $Q(t)$ determined by the ERP closed $\G_2$-structure $\f(t)$ coincide with the subbundles $P$ and $Q$ determined by $\f=\f(0)$, 
at each time $t\in\R$ we can endow the $P$-leaves and the $Q$-leaves with the Riemannian metric induced by $g_{\f(t)}$. 
We denote the corresponding Riemannian volume forms by $dV_P(t)$ and $dV_Q(t)$, respectively, and we let $dV_P\coloneqq dV_P(0),~dV_Q\coloneqq dV_Q(0)$. 

Consider an oriented $P$-leaf $L_P \hookrightarrow M$.  
By point \ref{v}) of Proposition \ref{ERPprop}, we know that the volume form $dV_P(t)$ on $L_P$ coincides with the 
restriction of the closed 3-form $-|\tau(t)|_{\f(t)}^{-2}*_{\f(t)}(\tau(t)\W\tau(t))$. Such a volume form is constant along the flow, as 
\[
dV_{P}  = \left.-|\tau|_\f^{-2}*_\f(\tau\W\tau)\right|_{L_P} = dV_{P}(t),
\]
by point \ref{4}) of Proposition \ref{PrepProp}. 
Using the same result, we see that the volume form $dV_{Q}(t)$ of an oriented $Q$-leaf $L_Q\hookrightarrow M$ is given by
\begin{eqnarray*}
dV_{Q}(t)	&=&	\left.-|\tau(t)|^{-2}_{\f(t)}(\tau(t)\W\tau(t))\right|_{L_Q} = \left.-|\tau|^{-2}_\f \exp\left(\frac{|\tau|^2_\f}{3}t\right)(\tau\W\tau)\right|_{L_Q}\\
		&=& \exp\left(\frac{|\tau|^2_\f}{3}t\right)dV_Q.
\end{eqnarray*}

It is now immediate to prove the following. 
\begin{proposition}
Let $\f(t)$ be the solution of the Laplacian flow starting from an ERP closed $\G_2$-structure on a compact 7-manifold $M.$ 
Then, the volume of the $P$-leaves is constant along the flow. Moreover:
\begin{enumerate}[1.]
\item when $t\rightarrow+\infty$, the volume of the $P$-leaves goes to zero relative to the volume of the manifold, while the volume of the $Q$-leaves 
and the volume of the manifold $M$ grow at the same rate; 
\item when $t\rightarrow-\infty$,  
the volume of the $Q$-leaves and the volume of the manifold $M$ tend to zero at the same rate. 
\end{enumerate}
\end{proposition}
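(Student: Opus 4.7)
The plan is to read off the three asymptotic statements directly from the pointwise volume form identities already established in the paragraphs preceding the proposition. The preceding discussion derives
\begin{equation*}
dV_{\f(t)} = \exp\!\left(\tfrac{|\tau|^2_\f}{3}\,t\right) dV_\f,\qquad dV_P(t) = dV_P,\qquad dV_Q(t) = \exp\!\left(\tfrac{|\tau|^2_\f}{3}\,t\right) dV_Q,
\end{equation*}
and once these are in hand the proposition is a matter of integration and comparison of exponential factors.

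First I would integrate the first identity over $M$ and the third over a fixed oriented $Q$-leaf $L_Q \hookrightarrow M$ (assumed compact for now), and simply observe that the pointwise identity $dV_P(t) = dV_P$ forces $\mathrm{Vol}_{g_{\f(t)}}(L_P) = \mathrm{Vol}_{g_\f}(L_P)$ for every $t\in\R$. This is exactly the first assertion of the proposition, namely that the volume of the $P$-leaves is independent of $t$.

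Next I would compute the relevant ratios. For item~1, as $t\to+\infty$,
\begin{equation*}
\frac{\mathrm{Vol}_{g_{\f(t)}}(L_P)}{\mathrm{Vol}_{g_{\f(t)}}(M)} = \frac{\mathrm{Vol}_{g_\f}(L_P)}{\mathrm{Vol}_{g_\f}(M)}\exp\!\left(-\tfrac{|\tau|^2_\f}{3}\,t\right) \longrightarrow 0,
\end{equation*}
while $\mathrm{Vol}_{g_{\f(t)}}(L_Q)/\mathrm{Vol}_{g_{\f(t)}}(M) = \mathrm{Vol}_{g_\f}(L_Q)/\mathrm{Vol}_{g_\f}(M)$ is a non-zero constant in $t$, which expresses that $M$ and $L_Q$ grow at the same exponential rate. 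For item~2, as $t\to-\infty$, both $\mathrm{Vol}_{g_{\f(t)}}(L_Q)$ and $\mathrm{Vol}_{g_{\f(t)}}(M)$ carry the same factor $\exp(|\tau|^2_\f t/3)$, so their ratio is constant and both shrink to $0$ at the same rate.

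The only mild subtlety, and the one place a reader might object, is that the leaves of the foliations determined by $P$ and $Q$ on the compact manifold $M$ need not themselves be compact, so $\mathrm{Vol}_{g_\f}(L_P)$ or $\mathrm{Vol}_{g_\f}(L_Q)$ could a priori be infinite. However, since the pointwise volume forms $dV_P(t)$ and $dV_Q(t)$ already carry all the rate information — they are, respectively, constant and exponentially scaled by $\exp(|\tau|^2_\f t/3)$ relative to $dV_{\f(t)}$ on $M$ — the same comparison can be read off pointwise (or on any relatively compact piece of a leaf), so this is not a genuine obstruction. Beyond this bookkeeping, there is no real difficulty: the proposition is essentially a corollary of Proposition~\ref{PrepProp}, points~2) and 4), already exploited above the statement.
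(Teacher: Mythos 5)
Your proposal is correct and follows exactly the paper's route: the paper derives the same three pointwise identities for $dV_{\f(t)}$, $dV_P(t)$ and $dV_Q(t)$ in the paragraphs preceding the statement and then declares the proposition immediate, just as you do by integrating and comparing the exponential factors. Your extra remark about possibly non-compact leaves is a reasonable bit of bookkeeping that the paper leaves implicit.
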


\begin{remark}
Rescaling the metric $g_{\f(t)}$ as $\exp\left(-\frac{|\tau|^2_\f}{6}t\right)\,g_{\f(t)}$ shows that the volume of the $P$-leaves goes to zero as $t\rightarrow+\infty$, that is, the $P$-leaves collapse as $t\rightarrow+\infty$. 
\end{remark}

\section{Examples}\label{ExSect}
In this section, we review two examples of 7-manifolds endowed with an ERP closed $\G_2$-structure obtained in \cite{Bry,Lau2}, 
and we discuss some related results. 
Further examples are considered in \cite{Bal,LaNi,LaNi2}. 

We begin with the example obtained by Bryant in \cite[Ex.~1]{Bry}. 
It consists of an invariant ERP closed $\G_2$-structure on the non-compact homogeneous space $\SL(2,\C)\ltimes\C^2 / \SU(2)$. 
For the sake of convenience, we give the following alternative description (cf.~\cite[Sect.~6.3]{ClIv2} and \cite[Ex.~4.13]{Lau2}).

\begin{example}\label{BryEx} 
Let $\frr=\langle e_1,e_2,e_3\rangle$ be the three-dimensional non-unimodular solvable Lie algebra with non-zero Lie brackets 
\[
[e_1,e_2] = e_2,\quad [e_1,e_3]=e_3. 
\]
Consider the abelian Lie algebra $\R^4 = \langle e_4,e_5,e_6,e_7\rangle$, and the Lie algebra homomorphism 
$\mu:\frr\rightarrow\mathrm{Der}(\R^4)\cong\mathfrak{gl}(4,\R)$ defined as follows
\[
\mu(e_1) = \mathrm{diag}\left(-\frac12,-\frac12,\frac12,\frac12\right),~
\mu(e_2) = \left(\begin{array}{cccc} 0&0&0&0\\ 0&0&0&0\\ 0&-1&0&0\\ -1&0&0&0\\  \end{array}\right),~
\mu(e_3) = \left(\begin{array}{cccc} 0&0&0&0\\ 0&0&0&0\\ -1&0&0&0\\ 0&1&0&0\\  \end{array}\right). 
\]
The semidirect product  $\frs\coloneqq\frr\ltimes_\mu\R^4$ is a seven-dimensional non-unimodular completely solvable Lie algebra. 
Denoted by $(e^1,\ldots,e^7)$ the dual basis of $(e_1,\ldots,e_7)$, the structure equations $\left(de^i\right)_{i=1,\ldots,7}$ of $\frs$ are given by 
\[
\left(0, -e^{12}, -e^{13}, \frac12\,e^{14}, \frac12\,e^{15}, -\frac12\,e^{16}+e^{25}+e^{34},  -\frac12\,e^{17}+e^{24}-e^{35}\right).
\]
It is now straightforward to check that the 3-form $\f$ given by \eqref{G2adapted} defines an ERP closed $\G_2$-structure on $\frs$ with intrinsic torsion form 
\[
\tau = 3\,e^{45}-3\,e^{67}.
\]
From this we see that $|\tau|^2_\f=18$, $P=\frr$, and $Q=\R^4$. 
Notice that $\f$ is exact:
\[
\f = d\left( -\frac12 e^{23} +e^{45} -e^{67}\right). 
\]
Left-multiplication allows to extend the 3-form $\f$ to a left-invariant one, say $\widetilde\f$, on the simply connected solvable Lie group $\mathrm{S}$ with Lie algebra $\frs$. 
Bryant's example is then described by the pair $(\mathrm{S},\widetilde\f)$. 
\end{example}

\begin{remark}\ 
Albeit the Lie algebra $\frs$ is not unimodular, the corresponding simply connected solvable Lie group $\mathrm{S}$ 
admits a compact quotient, as it is acted on by a torsion-free discrete subgroup $\Gamma \subset \mathrm{Aut}(\mathrm{S},\widetilde\f)$ 
(cf.~\cite[Ex.~1]{Bry} and \cite[Remark 4.14]{Lau2}).  
This gives rise to a compact locally homogeneous  example of ERP closed $\G_2$-structure on $\Gamma\backslash\mathrm{S}$. 
\end{remark}

As observed by Cleyton and Ivanov \cite{ClIv2}, in the above example the intrinsic torsion form $\tau$ is parallel with respect 
to the canonical $\G_2$-connection $\overline{\nabla}$. More generally, the following holds.  
\begin{theorem}[\cite{ClIv2}] 
Let $M$ be a 7-manifold endowed with a closed $\G_2$-structure $\f$ and let $\overline{\nabla}$ be the corresponding canonical $\G_2$-connection. 
If the intrinsic torsion form $\tau$ of $\f$ is parallel with respect to $\overline{\nabla}$, then $\f$ is ERP and $(M,g_\f)$ is locally isometric to Bryant's example. 
\end{theorem}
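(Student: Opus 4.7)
The plan is to split the proof into two parts: first establish that $\f$ is ERP, then use the rigidity provided by $\overline\nabla\tau=0$ to identify $(M,g_\f)$ with Bryant's model. Throughout I would exploit that $\overline\nabla$ is a metric $\G_2$-connection, so that $\overline\nabla\f=0$, $\overline\nabla g_\f=0$, $\overline\nabla *_\f\f=0$, and that its torsion $T^{\overline\nabla}$ is, on a closed $\G_2$-manifold, an explicit $\G_2$-equivariant function of $\tau$ alone (as in Bryant's analysis in \cite{Bry}).

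For the ERP part, the starting point is the general identity
\[
d\tau(X,Y,Z) = \sum_{\mathrm{cyc}} (\overline\nabla_X\tau)(Y,Z) - \sum_{\mathrm{cyc}} \tau(T^{\overline\nabla}(X,Y),Z),
\]
valid for any metric connection. The hypothesis $\overline\nabla\tau=0$ kills the first sum, so $d\tau$ becomes a purely algebraic expression in $\tau$ and $T^{\overline\nabla}$; substituting the explicit formula for $T^{\overline\nabla}$ on a closed $\G_2$-manifold yields an identity $d\tau = Q(\tau)$ with $Q$ a $\G_2$-equivariant quadratic map. Decomposing both sides along the standard splitting $\Lambda^3 T^*M = \Lambda^3_1\oplus\Lambda^3_7\oplus\Lambda^3_{27}$ should force the $\Lambda^3_7$-component of $d\tau$ to vanish, while the $\Lambda^3_1$ and $\Lambda^3_{27}$ pieces reproduce $\frac16|\tau|_\f^2\,\f$ and $\frac16 *_\f(\tau\W\tau)$ respectively, which is precisely \eqref{ERPcond}.

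Once ERP is known, $|\tau|_\f$ is constant (since $\overline\nabla g_\f=0$), so Proposition \ref{ERPprop} gives the orthogonal decomposition $TM=P\oplus Q$ together with the closed simple forms $\tau\W\tau$ and $*_\f(\tau\W\tau)$, all of which are $\overline\nabla$-parallel. Consequently the holonomy of $\overline\nabla$ lies in the common $\G_2$-stabilizer of $\tau$ and of the splitting $P\oplus Q$, a proper closed subgroup that can be read off from the normal form for $\tau$ used in the proof of Proposition \ref{PrepProp}. Moreover $\overline\nabla\tau=0$ forces $\overline\nabla T^{\overline\nabla}=0$, so a standard Ambrose--Singer argument yields that $(M,\f,\overline\nabla)$ is locally homogeneous, with infinitesimal model determined by the holonomy algebra and the value of $\tau$ at one point. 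A direct verification on Example \ref{BryEx} shows that Bryant's $(\mathrm{S},\widetilde\f)$ satisfies $\overline\nabla\widetilde\tau=0$ and realises the same infinitesimal model, up to a rescaling that normalises $|\tau|_\f$, so by uniqueness of simply connected complete locally homogeneous geometries with prescribed holonomy data the Riemannian manifold $(M,g_\f)$ is locally isometric to Bryant's example.

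The main obstacle is the first step: carrying out the $\G_2$-equivariant algebra needed to isolate the three components of $d\tau$ after substituting the explicit form of $T^{\overline\nabla}$. This is made tractable by working in an adapted frame where $\tau$ takes the normal form $c_1(e^{45}-e^{67})+c_2(e^{46}+e^{57})+c_3(e^{47}-e^{56})$ exhibited in the proof of Proposition \ref{PrepProp}, so that the identity $d\tau = Q(\tau)$ reduces to a finite pointwise computation. The subsequent holonomy reduction and Ambrose--Singer argument are then essentially standard.
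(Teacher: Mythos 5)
This statement is quoted from Cleyton--Ivanov \cite{ClIv2}; the paper offers no proof of it, so there is nothing internal to compare your argument against, and I can only assess the proposal on its own terms.

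Your first step is a sound plan: since $\Lambda^2_{14}\cong\frg_2$ and $S^2(\frg_2)$ contains no copy of the $7$-dimensional representation and exactly one copy of the $27$-dimensional one, the hypothesis $\overline\nabla\tau=0$ reduces \eqref{ERPcond} to the verification of a single universal constant, which may legitimately be evaluated on any convenient $\tau$ by equivariance. Two cautions, though: the normal form $c_1(e^{45}-e^{67})+c_2(e^{46}+e^{57})+c_3(e^{47}-e^{56})$ is \emph{not} the normal form of a generic element of $\Lambda^2_{14}$ --- it is the rank-four form available only once one knows $\tau\W\tau$ is simple, i.e.\ once ERP is already established --- so you must be explicit that you are only using it to evaluate a constant that equivariance has already shown to be universal; and $\frac16|\tau|_\f^2\,\f$ and $\frac16*_\f(\tau\W\tau)$ are not the $\Lambda^3_1$ and $\Lambda^3_{27}$ components of $d\tau$, since $*_\f(\tau\W\tau)$ itself has a nonzero $\Lambda^3_1$ part.

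The genuine gap is in the second half. The Ambrose--Singer characterization of local homogeneity requires a connection with $\overline\nabla T^{\overline\nabla}=0$ \emph{and} $\overline\nabla R^{\overline\nabla}=0$, and the second condition does not follow from the first: the first Bianchi identity with parallel torsion determines $R^{\overline\nabla}$ only modulo the formal curvature space $\mathcal K(\frg_2)$, which is the $77$-dimensional module and is not an algebraic function of $\tau$. The torsion-free case makes the failure of your inference vivid: there $\overline\nabla=\nabla^\f$, the torsion vanishes identically (so is trivially parallel), yet generic holonomy-$\G_2$ metrics are not locally homogeneous. So ``$\overline\nabla\tau=0$ forces $\overline\nabla T^{\overline\nabla}=0$, hence Ambrose--Singer applies'' is not a valid step; one must additionally control the curvature of $\overline\nabla$ (this is precisely the hard part of Cleyton--Ivanov's analysis, where the parallel splitting $TM=P\oplus Q$ and the Bianchi identities are used to show the curvature is itself algebraically determined by $\tau$). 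Relatedly, even granting local homogeneity, ``uniqueness of the infinitesimal model'' is not free: the model is determined by the holonomy algebra together with the values of both $T^{\overline\nabla}$ and $R^{\overline\nabla}$ at a point, so without pinning down the curvature you cannot yet conclude that Bryant's example is the only possibility.
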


We now describe a one-parameter family of pairwise non-isomorphic solvable Lie algebras admitting an ERP closed $\G_2$-structure and including the above example. 
\begin{example}\label{1ParEx}
Let us consider the one-parameter family of three-dimensional non-unimodular solvable Lie algebras $\frr_\eta = \langle e_1,e_2,e_3\rangle$ with non-zero Lie brackets
\[
[e_1,e_2] = e_2 + \eta\,e_3,\quad [e_1,e_3] = -\eta\,e_2 + e_3, \, \, \eta\in\R. 
\]
Notice that $\frr_{\sst0} = \frr$, while the number $1+\eta^2$ provides a complete isomorphism invariant for $\frr_\eta$ when $\eta\neq0$ 
(see \cite[Lemma 4.10]{Mil}).
Moreover, it is possible to check that the Ricci endomorphism of the inner product $g=(e^1)^2+(e^2)^2+(e^3)^2$ on $\frr_\eta$ is diagonal with eigenvalue $-2$ of multiplicity three 
(cf.~\cite[Thm.~4.11]{Mil}). 

We let $\frs_\eta\coloneqq \frr_\eta\ltimes_{\mu_\eta}\R^4$, where $\R^4 = \langle e_4,e_5,e_6,e_7\rangle$ is the abelian Lie algebra, and the Lie algebra homomorphism 
$\mu_\eta:\frr_\eta\rightarrow\mathrm{Der}(\R^4)\cong\mathfrak{gl}(4,\R)$ is defined as follows
\[
\mu_\eta(e_1) = \left(\begin{array}{cccc} {\scriptstyle-}\frac12&0&0&0\\ 0&{\scriptstyle-}\frac12&0&0\\ 0&0&\frac12&\eta\\ 0&0&{\scriptstyle-}\eta&\frac12\\  \end{array}\right),\,
\mu_\eta(e_2) = \left(\begin{array}{cccc} 0&0&0&0\\ 0&0&0&0\\ 0&{\scriptstyle-}1&0&0\\ {\scriptstyle-}1&0&0&0\\  \end{array}\right),\,
\mu_\eta(e_3) =  \left(\begin{array}{cccc} 0&0&0&0\\ 0&0&0&0\\ {\scriptstyle-}1&0&0&0\\ 0&1&0&0\\  \end{array}\right). 
\]
When $\eta\neq0$, the seven-dimensional non-unimodular Lie algebra $\frs_\eta$ is solvable but not completely solvable,   
as the complex numbers $1\pm i\eta, \frac12\pm i\eta$ are eigenvalues of $\mathrm{ad}_{e_1}$. 
Furthermore, $\frs_{\sst0}$ coincides with the Lie algebra $\frs$ considered in Example \ref{BryEx}. 
 
Now, the 3-form $\f$ given by \eqref{G2adapted} is exact, and it defines an ERP closed $\G_2$-structure on $\frs_\eta$ with intrinsic torsion form 
$\tau = 3\,e^{45}-3\,e^{67}$. 
By left-multiplication, we can extend $\f$ to a left-invariant ERP closed $\G_2$-structure $\widetilde\f_\eta$ 
on the simply connected solvable Lie group $\mathrm{S}_\eta$ with Lie algebra $\frs_\eta$.  
Clearly, the Lie groups $\mathrm{S}_\eta$ are pairwise non-isomorphic for all $\eta\geq0$.
However, it is possible to check that the intrinsic torsion form of the ERP closed $\G_2$-structure on $\mathrm{S}_\eta$ is parallel with respect to the canonical $\G_2$-connection. 
Hence, by the result of Cleyton and Ivanov recalled above, $(\mathrm{S}_\eta,g_{\widetilde\f_\eta})$ is locally isometric to Bryant's example.
\end{example}

\begin{remark}
Although the simply connected solvable Lie groups $\mathrm{S}_\eta$ are pairwise non-isomorphic for different values of $\eta\geq0$, 
it is possible to show that the $\G_2$-structures $(\mathrm{S}_\eta,\widetilde\f_\eta)$ are pairwise equivalent.  
This has been proved in the recent work \cite{LaNi}. 
\end{remark}

The next example was obtained by Lauret in \cite{Lau2}. As shown in the same paper, it is not equivalent to Bryant's example (cf.~\cite[Rem.~4.15]{Lau2}). 
\begin{example}\label{Lauretexample}
Consider the abelian Lie algebras $\R^3=\langle e_1,e_2,e_3\rangle$ and $\R^4=\langle e_4,e_5,e_6,e_7\rangle$, and define the seven-dimensional unimodular solvable Lie algebra 
$\fru$ as the semidirect product $\R^3\ltimes_\mu\R^4$, with $\mu:\R^3\rightarrow\mathrm{Der}(\R^4)\cong\mathfrak{gl}(4,\R)$ given by
\[
\mu(e_1) = \mathrm{diag}(1,1,-1,-1),\quad \mu(e_2) = \mathrm{diag}(1,-1,1,-1),\quad \mu(e_3) = \mathrm{diag}(1,-1,-1,1).
\]
The structure equations of $\fru$ can be written with respect to the dual basis $(e^1,\ldots,e^7)$ of $(e_1,\ldots,e_7)$ as follows
\[
\left(0, 0, 0,  -e^{14}-e^{24}-e^{34}, -e^{15}+e^{25}+e^{35},  e^{16}-e^{26}+e^{36}, e^{17}+e^{27}-e^{37}   \right).
\]
The 3-form $\f$ given by \eqref{G2adapted} defines an ERP closed $\G_2$-structure on $\fru$ with intrinsic torsion form 
\[
\tau = -2\,e^{45}+2\,e^{67}-2\,e^{46}-2\,e^{57}+2\,e^{47}-2\,e^{56}. 
\]
Notice that $|\tau|_\f^2=24$, and that $P$ and $Q$ coincide with $\R^3$ and $\R^4$, respectively. 
\end{example}

\begin{remark}
As shown in \cite{Lau2}, in both Examples \ref{BryEx} and \ref{Lauretexample} the left-invariant ERP closed $\G_2$-structure is a steady Laplacian soliton on the corresponding 
non-compact simply connected solvable Lie group. By \cite[Cor.~1]{Lin}, it cannot descend to an invariant steady Laplacian soliton on any compact quotient.
\end{remark}

In the next theorem, we show that the Lie algebra $\fru$ described in the previous example is the unique unimodular Lie algebra admitting ERP closed $\G_2$-structures 
up to isomorphism. 

\begin{theorem}\label{UnimUniq}
Let $\G$ be a unimodular Lie group endowed with a left-invariant  ERP closed $\G_2$-structure. 
Then, its Lie algebra is isomorphic to the Lie algebra $\fru$ described in Example \ref{Lauretexample}.  
\end{theorem}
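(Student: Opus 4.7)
The plan is to reduce to linear algebra on the Lie algebra $\mathfrak{g}$ of $\G$, extract the structure constants from the ERP identity together with $d\f=0$, and then use unimodularity to eliminate every possibility except $\mathfrak{u}$. Since $\f$ is left-invariant, so are $\tau$ and $|\tau|_\f$; in particular $|\tau|_\f$ is constant, so Proposition \ref{ERPprop} applies and yields a $g_\f$-orthogonal splitting $\mathfrak{g}=\mathfrak{p}\oplus\mathfrak{q}$ with $\dim\mathfrak{p}=3$ and $\dim\mathfrak{q}=4$. Because the distributions $P$ and $Q$ are left-invariant and integrable, $\mathfrak{p}$ and $\mathfrak{q}$ are Lie subalgebras.

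I would then fix an adapted basis $(e_1,\ldots,e_7)$ with $\mathfrak{p}=\langle e_1,e_2,e_3\rangle$, $\mathfrak{q}=\langle e_4,\ldots,e_7\rangle$, and $\f$ in the canonical form \eqref{G2adapted}, exactly as in the proof of Proposition \ref{PrepProp}. In this frame one checks that $\tau\in\Lambda^2\mathfrak{q}^*$ and is anti-self-dual with respect to the orientation $e^{4567}$ of $\mathfrak{q}$, so that $*_\f(\tau\W\tau)=-|\tau|_\f^2\,e^{123}$ and the ERP identity collapses to
\[
d\tau=\frac{|\tau|_\f^2}{6}\bigl(e^1\W\eta_1+e^2\W\eta_2+e^3\W\eta_3\bigr),
\]
where $\eta_1=e^{45}+e^{67}$, $\eta_2=e^{46}-e^{57}$, $\eta_3=-e^{47}-e^{56}$ are the self-dual 2-forms on $\mathfrak{q}$ appearing in $\f-e^{123}$. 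The residual gauge freedom is the stabilizer inside $\G_2$ of the coassociative 4-plane $\mathfrak{q}$, a copy of $\SO(4)$, whose action on the anti-self-dual 2-forms of $\mathfrak{q}$ factors through $\SO(3)$; this lets me put $\tau$ into a convenient normal form.

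The third step is to read off the structure constants $c^i_{jk}$ defined by $de^i=-\tfrac12 c^i_{jk}e^{jk}$. The subalgebra conditions on $\mathfrak{p}$ and $\mathfrak{q}$ kill the constants whose index pattern crosses the splitting in the wrong way; closedness of $\f$ and the displayed formula for $d\tau$ then amount to a system of explicit linear and quadratic relations among the remaining $c^i_{jk}$. These relations split naturally into three blocks of data: the bracket on $\mathfrak{p}$, the bracket on $\mathfrak{q}$, and the representation $\mathrm{ad}\colon\mathfrak{p}\to\mathrm{End}(\mathfrak{q})$. Because the right-hand side of the formula for $d\tau$ consists entirely of the self-dual $\eta_i$ while $\tau$ itself is anti-self-dual, the symmetric parts of the endomorphisms $\mathrm{ad}_{e_i}|_\mathfrak{q}$ are forced to be simultaneously diagonalisable, with eigenvalues $\pm\tfrac12$, in the basis adapted to the chosen normal form of $\tau$.

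Finally I would invoke unimodularity: $\tr\ad_X=0$ for every $X\in\mathfrak{g}$. Combined with the relations extracted above, this tracelessness should force $\mathfrak{p}$ to be abelian, which is the step that fails for the Bryant branch of Example \ref{1ParEx} where $\mathfrak{p}=\mathfrak{r}_\eta$ is non-unimodular, and $\mathfrak{q}$ to be abelian as well. What remains is $\mathfrak{g}=\mathfrak{p}\ltimes_\mu\mathfrak{q}$ with $\mu$ a commuting triple of traceless diagonal endomorphisms of $\mathfrak{q}$ having eigenvalues $\pm\tfrac12$, and a linear change of basis identifies the resulting structure equations with those of $\mathfrak{u}$ in Example \ref{Lauretexample}. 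The main obstacle I anticipate is precisely this last step: the ERP identity alone does not exclude the non-abelian solvable case $\mathfrak{p}=\mathfrak{r}_\eta$, so the conclusion $[\mathfrak{p},\mathfrak{p}]=0$ must really be squeezed out of the interplay between tracelessness of $\ad_X$, closedness of $\f$, and the very specific self-dual content of $d\tau$, and doing so cleanly requires careful control of the non-diagonal pieces of $\mathrm{ad}_{e_i}|_\mathfrak{q}$.
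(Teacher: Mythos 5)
Your outline is a plan rather than a proof, and the decisive steps are exactly the ones you defer. The set-up is fine: left-invariance makes $|\tau|_\f$ constant, Proposition \ref{ERPprop} gives the splitting $\frg=\frp\oplus\frq$, integrability of $P$ and $Q$ makes $\frp$ and $\frq$ subalgebras, and one can normalize $\f$ and $\tau$ in an adapted basis as in the proof of Proposition \ref{PrepProp}. But the conclusion you need --- that unimodularity forces $[\frp,\frp]=0$, $[\frq,\frq]=0$, $\frq$ an ideal, and the action of $\frp$ on $\frq$ to be a commuting triple of diagonal endomorphisms --- is precisely the content of the theorem, and you acknowledge yourself that you do not know how to extract it from the tracelessness of $\ad_X$ together with $d\f=0$ and the ERP identity. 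The Bryant family of Example \ref{1ParEx} shows that the ERP identity in your normal form is compatible with non-abelian $\frp$ and non-diagonalisable $\ad_{e_i}|_\frq$, so everything hinges on the unimodularity bookkeeping you have not carried out. Two further concrete problems: your claim that the symmetric parts of $\ad_{e_i}|_\frq$ have eigenvalues $\pm\tfrac12$ is already false for Lauret's example \ref{Lauretexample}, where $\mu(e_1)=\diag(1,1,-1,-1)$ in the adapted orthonormal basis; and since $\frp$ is only known to be a three-dimensional subalgebra, it could a priori be $\frsu(2)$ or $\frsl(2,\R)$, so the non-solvable case must be excluded separately, which your sketch never addresses.

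For comparison, the paper takes an entirely different route that avoids solving the structure equations directly. From point vi) of Proposition \ref{ERPprop} the Ricci tensor is non-positive and vanishes exactly on $Q$, so Dotti's results on unimodular Lie groups with non-positive Ricci curvature force the nilradical $\frn$ to be abelian with $\frn\subseteq Q$. In the solvable case a dimension bound of \v{S}nobl then gives $\frn=Q$, a short argument rules out decomposability, and the Hindeleh--Thompson classification of indecomposable seven-dimensional solvable Lie algebras with four-dimensional abelian nilradical reduces the problem to checking three explicit unimodular families, two of which admit no closed $\G_2$-structure at all (one verifies $b_\phi(e_6,e_6)=-b_\phi(e_7,e_7)$ for every closed $3$-form $\phi$). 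The non-solvable case is handled by the authors' prior classification of non-solvable Lie algebras carrying closed $\G_2$-structures, eliminated by showing $\f$ would have to vanish on the nilradical or that $\tau\W\tau$ cannot be a closed simple $4$-form on $Q$. If you want to pursue your direct approach, you would essentially be redoing the Lauret--Nicolini classification of ERP structures on Lie groups; it can be made to work, but the elimination of the non-abelian and non-solvable branches must be done explicitly, not asserted.
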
 
\begin{proof} 
Consider the Lie algebra $\frg=\mathrm{Lie}(\G)$ endowed with the ERP closed $\G_2$-structure $\f$ corresponding to the left-invariant one on $\G$, 
and let $\frg=P\oplus Q$ be the induced $g_\f$-orthogonal decomposition of $\frg\cong T_{\sst 1_\G}\G$.
Since $\frg$ is unimodular and the Ricci tensor of $g_\f$ is non-positive, the nilradical  $\frn$ of $\frg$ is abelian by \cite[Cor.~1]{Dot1}.  
Moreover, by \cite[Lemma 1]{Dot1} we have $\Ric(g_\f) \vert_{\frn} =0$. 
Consequently, point \ref{vi}) of Proposition \ref{ERPprop} implies that the nilradical $\frn$ is contained in $Q$.  

To prove the assertion, we consider all possible seven-dimensional unimodular Lie algebras with abelian nilradical of dimension at most four, and we show that $\fru$ is the only one 
admitting an ERP closed $\G_2$-structure up to isomorphism. We shall deal with the cases $\frg$ solvable and $\frg$ non-solvable separately. 

If $\frg$ is solvable, by \cite[Thm.~1]{Sno} 
\[
\dim(\frg) = \dim(\frn) + k,  
\]
with $k \leq \dim(\frn) - \dim\left(\left[\frn,\frn\right]\right)$. As $\frn$ is abelian, we have 
\[
7 = \dim(\frn) + k, \quad k \leq \dim(\frn) \leq \dim(Q) =4,
\]
whence $\frn = Q$. Thus, the nilradical of $\frg$ is four-dimensional and abelian.  
Moreover, since $\frg$ is solvable, the dimension of its center $\frz(\frg)$ must satisfy $\dim(\frz(\frg))\leq 2\dim(\frn)-\dim(\frg)=1$ (see e.g.~\cite{Mub1}). 
We now claim that $\frg$ is not decomposable. 
Indeed, otherwise we could write $\frg = \frs_1 \oplus \frs_2$ for some solvable unimodular ideals $\frs_1,\frs_2\subseteq\frg$. 
If $\dim(\frs_1)=2$, then necessarily $\frs_1\cong\R^2$ and $\frz(\frg)$ would have dimension at least two, a contradiction. 
If $\dim(\frs_1)=3$, then $\frs_1$ is isomorphic either to the Lie algebra $\fre(1,1)$ of the group of rigid motions of the Minkowski 2-space or to the Lie algebra $\fre(2)$ of the group of rigid motions 
of the Euclidean 2-space (see, for instance, \cite[Thm.~1.1]{ABDO}). 
Since both these Lie algebras have two-dimensional abelian nilradical, it follows that the unimodular solvable Lie algebra $\frs_2$ must be four-dimensional with two-dimensional abelian nilradical. 
However, there are no four-dimensional Lie algebras satisfying these properties by \cite[Thm.~1.5]{ABDO}. 
Finally, if $\dim(\frs_1)=1$, then $\frs_1\cong\R$ and $\frs_2$ must be six-dimensional with three-dimensional nilradical.  
Any such $\frs_2$ must be decomposable by \cite[Thm.~3]{NdWi}. We can then conclude as in the previous cases.  
Therefore, $\frg$ is indecomposable.  

Indecomposable seven-dimensional solvable Lie algebras with an abelian nilradical of dimension four were classified in \cite{HiTh}. 
A scan of all possibilities allows to conclude that only three unimodular Lie algebras of this type occur up to isomorphism. 
One is isomorphic to the Lie algebra $\fru$ described in Example \ref{Lauretexample}, thus it admits an ERP closed $\G_2$-structure. 
The remaining ones correspond to the fourth and the fifth pencil in \cite[Prop.~5.4]{HiTh}. 
Their structure equations with respect to a suitable basis $(e^1,\ldots,e^7)$ are the following:
\begin{equation}\label{1case}
\left\{ 
\renewcommand\arraystretch{1.3}
\begin{array}{rcl}
de^i 	&=&	0,\quad i=1,2,3,\\
de^4	&=& \alpha\,e^{14}-e^{15}+\gamma\,e^{24},\\
de^5	&=& e^{14}+\alpha\,e^{15}+\gamma\,e^{25},\\
de^6	&=& -\alpha\,e^{16}-\beta\,e^{17}-\gamma\,e^{26}-\rho\,e^{27}-\sigma\,e^{37},\\
de^7	&=&	\beta\,e^{16}-\alpha\,e^{17}+\rho\,e^{26}-\gamma\,e^{27}+\sigma\,e^{36}, 
\end{array}
\right.
\end{equation}
where $\alpha,\beta,\rho\in\R$ and $\gamma,\sigma\in\R\smallsetminus\{0\}$, and
\begin{equation}\label{2case}
\left\{ 
\renewcommand\arraystretch{1.3}
\begin{array}{rcl}
de^i 	&=&	0,\quad i=1,2,3,\\
de^4	&=& \frac12\alpha\,e^{14}-e^{15}+\frac12\beta\,e^{24},\\
de^5	&=& e^{14}+\frac12\alpha\,e^{15}+\frac12\beta\,e^{25},\\
de^6	&=& -e^{36},\\
de^7	&=&	-\alpha\,e^{17}-\beta\,e^{27}+e^{37}, 
\end{array}
\right.
\end{equation}
where $\alpha\in\R$ and $\beta\in\R\smallsetminus\{0\}$.  
We now show that none of these Lie algebras admits closed $\G_2$-structures. 
The generic closed 3-form $\phi$ on the Lie algebra with structure equations \eqref{1case} is given by
\begin{eqnarray*}
\phi	&=&	\phi_{123}e^{123} + \phi_{124}e^{124} + \phi_{125}e^{125} + \phi_{135}e^{135} + \phi_{136}e^{136} + \phi_{137}e^{137} + \phi_{147}e^{147} + \phi_{157}e^{157}  \\
	& & +\frac{\rho}{\sigma} \phi_{357} e^{257} +\frac{\rho}{\sigma} \phi_{347} e^{247} +\frac{\alpha}{\gamma} \phi_{245} e^{145} +\left(\sigma \phi_{147}-\beta\phi_{347}\right) e^{356} + 
		\left(\beta\phi_{357}-\sigma\phi_{157}\right)e^{346} \\
	& & +\left(\alpha\phi_{235} - \gamma\phi_{135} \right) e^{234} +\rho\left(\phi_{147} - \frac{\beta}{\sigma}\phi_{347}\right) e^{256} +\rho\left(\frac{\beta}{\sigma}\phi_{357} - \phi_{157} \right) e^{246} + \phi_{245}e^{245}  \\
	& & +\frac{\alpha^2\phi_{235} -\alpha\gamma\phi_{135} +\phi_{235}}{\gamma}e^{134} + \phi_{235}e^{235} + \phi_{236}e^{236} + \phi_{237}e^{237}  +  \phi_{347}e^{347} + \phi_{357}e^{357}\\
	& & +\frac{\beta^2\phi_{357} -\beta\sigma\phi_{157} -\phi_{357}}{\sigma}e^{146} -\frac{\beta^2\phi_{347} -\beta\sigma\phi_{147} -\phi_{347}}{\sigma}  e^{156} +\frac{\alpha}{\gamma}\phi_{267}e^{167}+ \phi_{267}e^{267} \\
	& &  +\frac{\alpha\phi_{236} -\beta\phi_{237} -\gamma\phi_{136}+\rho\phi_{137} }{\sigma} e^{127} - \frac{\alpha\phi_{237} +\beta\phi_{236} -\gamma\phi_{137}-\rho\phi_{136}}{\sigma} e^{126},  
\end{eqnarray*}
where $\phi_{ijk}\in\R$. A simple computation shows that 
\[
b_\phi(e_6,e_6) = -\phi_{267}\left(\phi_{147}\phi_{357}-\phi_{157}\phi_{347} \right) e^{1234567} = -b_\phi(e_7,e_7),
\]
thus $\phi$ cannot define a $\G_2$-structure. 

As for the Lie algebra with structure equations \eqref{2case}, the generic closed 3-form has the following expression
\begin{eqnarray*}
\phi	&=& \phi_{123}e^{123}+\ \phi_{124}e^{124}+\phi_{125}e^{125} +\phi_{135}e^{135} + \phi_{136}e^{136} + \phi_{137}e^{137} + \phi_{235}e^{235} + \phi_{236}e^{236} \\
	& & + \phi_{237}e^{237} + \phi_{245}e^{245}+\phi_{267}e^{267} +\phi_{346}e^{346} + \phi_{347}e^{347} + \phi_{356}e^{356}  + \phi_{357}e^{357}  \\
	& & +\frac12\left(\alpha\phi_{235}-\beta\phi_{135}\right) e^{234} + \left(\phi_{346} -\frac{\alpha}{2}\phi_{356} \right) e^{156} - \left(\phi_{347} +\frac{\alpha}{2}\phi_{357} \right) e^{157}  +\alpha\phi_{267}e^{167}   \\
	& & - \left(\phi_{356} +\frac{\alpha}{2}\phi_{346} \right) e^{146} + \left(\phi_{357} -\frac{\alpha}{2}\phi_{347} \right) e^{147} +\left(\alpha\phi_{237} - \beta\phi_{137}\right) e^{127} +\alpha\phi_{245}e^{145}\\
	& & -\frac{\beta}{2}\left(\phi_{347}e^{247} +\phi_{346}e^{246}  +\phi_{357}e^{257}  +\phi_{356}e^{256}\right) +\left(\frac{\alpha^2}{2}\phi_{235}-\frac{\alpha}{2}\phi_{135} +2\phi_{235}\right)e^{134},
\end{eqnarray*}
with $\phi_{ijk}\in\R$, and we immediately see that also in this case $b_\phi(e_6,e_6) = -b_\phi(e_7,e_7)$.

We now focus on the case when $\frg$ is unimodular and non-solvable. 
By the classification in \cite{FiRa2}, $\frg$ is isomorphic to one of the following 
\begin{eqnarray*}
&\left(-e^{23},-2e^{12},2e^{13},0,-e^{45},\frac{1}{2}e^{46}-e^{47},\frac{1}{2}e^{47}\right);&  \\ 
&\left(-e^{23},-2e^{12},2e^{13},0,-e^{45},-\alpha\,e^{46},(1+\alpha)\,e^{47}\right), ~ -1<\alpha\leq-\frac12;& \\ 
&\left(-e^{23},-2e^{12},2e^{13},0, -\alpha\,e^{45},\frac{\alpha}{2}\,e^{46}-e^{47},e^{46}+\frac{\alpha}{2}\,e^{47}\right),~\alpha>0;& \\ 
&\left(-e^{23},-2e^{12},2e^{13},-e^{14}-e^{25}-e^{47},e^{15}-e^{34}-e^{57},2e^{67},0\right).& 
\end{eqnarray*}
All of these Lie algebras have a three-dimensional abelian nilradical $\frn$. 
Since the 3-form defining an ERP closed $\G_2$-structure vanishes on $Q$, it must vanish on $\frn$. 
The first three Lie algebras in the above list do not admit any stable closed 3-form satisfying this condition.  
Indeed, in each case the abelian nilradical is given by $\frn=\langle e_5,e_6,e_7\rangle$ and imposing that the generic closed 3-form $\phi$ satisfies 
$\phi(e_5,e_6,e_7)=0$ gives $b_\phi(e_i,e_i)=0$, for $i=5,6,7$. 
We are then left with the last Lie algebra, whose nilradical is $\frn=\langle e_4,e_5,e_6\rangle$.  
Let us consider the generic closed 3-form 
\begin{eqnarray*}
\phi	&=&  \phi_{123}e^{123}-3\phi_{247}e^{124}-\phi_{234}e^{125}+\phi_{267}e^{126}+\phi_{127}e^{127}-\phi_{235}e^{134}+3\phi_{357}e^{135} \\
	&  &	-\phi_{367}e^{136}+\phi_{137}e^{137}+\phi_{467}e^{146} +\left(\phi_{257}-\phi_{234}\right) e^{147} -\phi_{567}e^{156} -\left(\phi_{235}+\phi_{347}\right)e^{157}\\
	&  &	+2\phi_{236}e^{167}+\phi_{234}e^{234}+\phi_{235}e^{235}+\phi_{236}e^{236}+\phi_{237}e^{237}+\phi_{247}e^{247}+\phi_{467}e^{256}\\
	&  &	+\phi_{257}e^{257}+\phi_{267}e^{267}+\phi_{567}e^{346}+\phi_{347}e^{347}+\phi_{357}e^{357}+\phi_{367}e^{367}+\phi_{456}e^{456}\\
	&  &	+\phi_{457}e^{457}+\phi_{467}e^{467}+\phi_{567}e^{567},
\end{eqnarray*}
where $\phi_{ijk}\in\R$. The condition $\phi(e_4,e_5,e_6)=0$ gives $\phi_{456}=0$. 
Up to a basis change, we may assume that $Q=\langle e_4,e_5,e_6,v\rangle$ with $v = v_1e_1+v_2e_2+v_3e_3+v_7e_7$ for some real numbers $v_1,v_2,v_3,v_7$.  
Now, if we consider the equation $0=\phi(e_4,e_5,v) = \phi_{457}\,v_7$, we see that necessarily $v_7=0$, otherwise $b_\phi(e_4,e_4)=0$ and $\phi$ would not be stable. 
An inspection of the equations $\phi(e_4,e_6,v) = 0 = \phi(e_5,e_6,v)$ when $\phi$ is stable gives the following possibilities
\begin{enumerate}[$\cdot$]
\item $\phi_{467} \neq 0$, $\phi_{567} \neq 0$, and $Q = \left\langle e_4, e_5, e_6, e_1 + \frac{\phi_{567}}{\phi_{467}} e_2 -  \frac{\phi_{467}}{\phi_{567}} e_3\right\rangle$;
\item $\phi_{467} =0$, $\phi_{567} \neq 0$, and $Q = \langle e_4, e_5, e_6, e_2\rangle$;
\item $\phi_{467} \neq 0$, $\phi_{567}= 0$, and $Q = \langle e_4, e_5, e_6, e_3\rangle$.
\end{enumerate} 
Now, if $\tau$ is the intrinsic torsion form of an ERP closed $\G_2$-structure, then the 4-form $\tau\W\tau\in\Lambda^4(Q^*)$ must be closed and simple. 
However, in none of the above cases there exist closed simple 4-forms on $Q$. 
\end{proof}

\medskip\noindent
{{\bf Acknowledgements.} The authors would like to thank Jason Lotay for useful comments and suggestions, and Fabio Podest\`a for useful conversations. 
Most of this work was done when A.R.~was a postdoctoral fellow at the Department of Mathematics and Computer Science ``U.~Dini'' of the 
Universit\`a degli Studi di Firenze. He is grateful to the department for hospitality.

\end{document}